\documentclass[12pt]{amsart}
\usepackage{amsthm}
\usepackage{amsmath,amssymb,euscript,mathrsfs,bm}
\usepackage{eepic,bbm}
\usepackage{bbold}
\usepackage{dsfont}
\usepackage{ebgaramond}
\usepackage{setspace}
\usepackage{tikz-cd}
\usepackage[colorlinks=true,linkcolor=black,citecolor=black,urlcolor=black]{hyperref}
\usepackage{cleveref}

\newcommand{\F}{\mathds{F}}
\newcommand{\C}{\mathds{C}}

\newcommand{\Q}{\mathds{Q}}

\newcommand{\Z}{\mathds{Z}}
\renewcommand{\P}{\mathds{P}}
\newcommand{\A}{\mathds{A}}
\newcommand{\G}{\mathds{G}}

\renewcommand{\O}{\mathcal{O}}

\newcommand{\cL}{\mathcal{L}}

\DeclareMathOperator{\tchar}{char}

\DeclareMathOperator{\im}{im}

\DeclareMathOperator{\Hom}{Hom}

\DeclareMathOperator{\Pic}{Pic}

\DeclareMathOperator{\Spec}{Spec}

\newcommand{\dual}{\vee}      %

\newcommand{\kk}{\mathds{k}}        %

\newtheoremstyle{scthm}%
{}{}{\itshape}{}{\scshape}{.}{ }{}

\newtheoremstyle{scdef}%
{}{}{}{}{\scshape}{.}{ }{}

\theoremstyle{scthm}

\newtheorem{theorem}{Theorem}
\newtheorem{lemma}[theorem]{Lemma}
\newtheorem{proposition}[theorem]{Proposition}
\newtheorem{corollary}[theorem]{Corollary}
\newtheorem{conjecture}[theorem]{Conjecture}

\newtheorem*{thm*}{Theorem}
\newtheorem*{cor*}{Corollary}
\newtheorem*{prop*}{Proposition}
\newtheorem*{claim*}{Claim}
\newtheorem*{lem*}{Lemma}

\theoremstyle{scdef}
\newtheorem{definition}[theorem]{Definition}
\newtheorem{remark}[theorem]{Remark}

\newtheorem*{def*}{Definition}
\newtheorem*{rmk*}{Remark}

\begin{document}

\newcommand{\isom}{\cong}
\renewcommand{\setminus}{\smallsetminus}
\renewcommand{\phi}{\varphi}
\newcommand{\exterior}{\textstyle\bigwedge}
\renewcommand{\tilde}{\widetilde}
\renewcommand{\hat}{\widehat}
\renewcommand{\bar}{\overline}

\title[Chow and Pic of Richardson varieties]{On Chow and Picard groups of Richardson varieties}
\author{David Anderson}
\address{Department of Mathematics, The Ohio State University, Columbus, OH 43210}
\email{anderson.2804@math.osu.edu}

\author{Sam Payne}
\address{Department of Mathematics, University of Michigan, Ann Arbor, MI 48109}
\email{sdpayne@umich.edu}

\thanks{DA was partially supported by NSF CAREER DMS-1945212, and SP was partially supported by NSF DMS-2542134. Both authors were supported by memberships at the Institute for Advanced Study funded by the Charles Simonyi Endowment.}

\date{August 5, 2026}

\begin{abstract}
We show that if $\Pi \subseteq G/P$ is a projected Richardson variety, then the degree map $\Pic(\Pi) \to A_1(\Pi)^\dual$ is injective.  Moreover, it is an isomorphism in characteristic zero and in all but finitely many positive characteristics. We conjecture that it is an isomorphism in every characteristic. Our proofs are geometric, using a linear paving, topological properties of klt singularities, and reduction to characteristic zero.
\end{abstract}

\maketitle

\renewcommand{\bfseries}{\itshape}

The Picard group of a Schubert variety $Y$ in a generalized flag variety $G/P$ is well understood, in every characteristic: the restriction morphism $\Pic(G/P) \to \Pic(Y)$ is surjective, and its kernel has a simple description \cite[\S XII]{mathieu}.  Likewise, the Chow group of curves $A_1(Y)$ is freely generated by the classes of the one-dimensional Schubert varieties in $Y$, because $Y$ is a union of finitely many $B$-orbits, each isomorphic to an affine space.  Moreover, the degree map $\Pic(Y) \to A_1(Y)^\vee$ is an isomorphism.

The situation is more subtle for Richardson varieties.  These varieties need not admit a cell decomposition by affine spaces, nor an action of a solvable group with a dense orbit.  In fact, for a Richardson variety $X \subseteq G/P$, the restriction morphism $\Pic(G/P) \to \Pic(X)$ need not be surjective: already the three-dimensional quadric $Sp_4/P$ contains a Richardson curve that is a conic, so the restricted line bundles on it have even degree.  The purpose of this note is to show that the relation between the two groups nevertheless survives, for Richardson varieties and, more generally, for projected Richardson varieties $\Pi \subseteq G/P$, the images of Richardson varieties in $G/B$ under the projection: the degree map $\Pic(\Pi) \to A_1(\Pi)^\vee$ is injective in every characteristic, and it is an isomorphism in characteristic zero and in all but finitely many positive characteristics.

Our work builds on, and is complementary to, the systematic investigation of Picard groups of Richardson varieties initiated by Buch \cite{buch}.  He proved that, for a Richardson variety $X$ in a cominuscule $G/P$ that is not of exceptional type $E$, the degree map $\Pic(X) \to A_1(X)^\vee$ is an isomorphism.  Along the way, he showed that such an $X$ admits a desingularization $\widetilde X \to X$ such that $\Pic(\widetilde X) \to A_1(\widetilde X)^\vee$ is injective.  We extend both statements to Richardson and projected Richardson varieties in arbitrary $G/P$.

\medskip

Throughout, all varieties are reduced schemes of finite type over an algebraically closed field.  We write $A_k(X)$ for the Chow (homology) groups and $A^k(X)$ for the operational Chow cohomology groups.  % 
``Dual'' means $\Hom$ into the coefficient ring; for example, $A_1(X)^\dual = \Hom(A_1(X),\Z)$.

\medskip

Recall that for any variety $X$, there is a natural degree map $\Pic(X) \to A_1(X)^\dual$ taking the class of a line bundle $\cL$ on $X$ to the homomorphism $[C] \mapsto \deg(\cL|_C)$. 

A {\it Richardson variety} is an intersection of opposite Schubert varieties in $G/P$:
\[
 X_u^v = X_u \cap X^v.
\]
Here $X_u = \overline{B^-uP/P}$ and $X^v = \overline{BvP/P}$, where $B\subset P\subset G$ is a Borel subgroup and $B^-$ is an opposite Borel.  A {\it projected Richardson variety} $\Pi\subseteq G/P$ is the image of a Richardson variety $X_u^v \subseteq G/B$ under the projection $\pi\colon G/B \to G/P$.  Projected Richardson varieties are defined over any algebraically closed field. When the choice of base field is relevant, we write $\Pi_\kk$ to denote a projected Richardson variety over a specified field $\kk$. For the comparison across characteristics in \S\ref{sec:charp} we use an integral model $\Pi_\Z$, built from Chevalley group schemes.

 \begin{theorem} \label{thm:isom}
For any projected Richardson variety $\Pi$, the degree map $\Pic(\Pi) \to A_1(\Pi)^\dual$ is injective. Moreover, if $\tchar(\kk) = 0$ then $\Pic(\Pi_\kk) \to A_1(\Pi_\kk)^\vee$ is an isomorphism.
 \end{theorem}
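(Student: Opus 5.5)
Our plan is to compare $\Pi$ with a resolution that is well understood and then descend, in two parts: injectivity and surjectivity.

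For \emph{injectivity}, we use a Bott--Samelson type resolution. Choose a resolution $f\colon \tilde\Pi \to \Pi$ obtained as a fiber product of Bott--Samelson resolutions of $X_u$ and $X^v$ in $G/B$ (chosen generically, or via a Kleiman transversality argument), composed with $\pi$. The map $f$ is birational with connected fibers, and $f_*\O_{\tilde\Pi}=\O_\Pi$, since projected Richardson varieties are normal with rational singularities. Hence $f^*\colon \Pic(\Pi)\to\Pic(\tilde\Pi)$ is injective: a line bundle $\cL$ with $f^*\cL$ trivial satisfies $\cL=f_*f^*\cL=\O_\Pi$.

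Next we want $\Pic(\tilde\Pi)\to A_1(\tilde\Pi)^\dual$ to be injective. For this we use a linear paving of $\tilde\Pi$, or at least of each fiber of a tower of $\P^1$-bundle-like maps, so that $A_*(\tilde\Pi)$ is finitely generated. The paving also gives $\Pic(\tilde\Pi)=A^1(\tilde\Pi)\isom A_1(\tilde\Pi)^\dual$, by the Kronecker duality property of linearly paved varieties (Totaro, Fulton--MacPherson--Sottile--Sturmfels). The degree map is compatible with pushforward, $\deg(f^*\cL\cdot C)=\deg(\cL\cdot f_*C)$. Therefore a bundle of degree zero on all curves of $\Pi$ pulls back to a bundle of degree zero on all curves of $\tilde\Pi$. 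That pullback is trivial, so $\cL$ is trivial.

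For \emph{surjectivity in characteristic zero}, take $\phi\in A_1(\Pi)^\dual$. Composing with $f_*$ gives a functional on $A_1(\tilde\Pi)$, hence a line bundle $M$ on $\tilde\Pi$ with $M\cdot C=\phi(f_*C)$. In particular $M$ is numerically trivial on every curve contracted by $f$, since those curves push forward to zero. We must show that $M$ descends, i.e. $M\isom f^*\cL$. Projected Richardson varieties are klt in characteristic zero: they are log Fano with a natural anticanonical boundary, by Frobenius splitting and the Knutson--Lam--Speyer results. We then want a relative basepoint-free or cone-theorem statement for $f$, namely that an $f$-numerically trivial line bundle on a resolution of a klt variety is the pullback of a line bundle. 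The naive route applies the basepoint-free theorem to $M$ over $\Pi$, which needs $M-K_{\tilde\Pi}$ to be $f$-nef and big; it is not in general, so we pass to a $\Q$-factorial model. Alternatively we argue topologically: the fibers of a resolution of a klt singularity are rationally chain connected (Hacon--McKernan), hence simply connected with $H^1(F,\O)=0$. Restricting $M$ to each fiber $F$ gives a numerically trivial bundle on a rationally chain connected variety, hence a torsion bundle. Using $R^1f_*\O=0$ (rational singularities) and the theorem on formal functions, $M|_{\hat F}$ is trivial on the formal neighborhood, so $f_*M$ is invertible and $M=f^*f_*M$. The worry is torsion in $\Pic(F)$ coming from $H^2(F,\Z)$. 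We will handle it via the paving, since the fibers are unions of pieces with torsion-free Chow groups, or by a refined argument showing that a numerically trivial $M$ restricts trivially on each fiber.

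\textbf{Main obstacle.} We expect the hardest step to be this descent in characteristic zero: showing that a line bundle numerically trivial on the fibers of $f$ is actually trivial near each fiber. This is exactly where the klt property, through rational chain connectedness, simple connectivity of fibers and rational singularities, is essential, and why the argument does not directly extend to positive characteristic.
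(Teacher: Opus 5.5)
Your injectivity argument is the same as the paper's: Theorem~\ref{thm:linearly-paved-inj} applied to the Brion resolution, whose linear paving comes from Bia\l{}ynicki-Birula because it has finitely many $T$-fixed points. The gap is in surjectivity, at the descent step you flag, and the reasoning leading into it does not hold. Rational chain connectedness does not imply simple connectedness. Nor is a numerically trivial line bundle on a rationally chain connected variety necessarily torsion. A nodal rational curve is a counterexample to both: it is rationally chain connected, has $\pi_1=\Z$, and has $\Pic^0\cong\G_m$. Simple connectivity of the fibers of a resolution of a klt singularity is a separate, deeper input: Takayama's local simple connectedness theorem, used in Corollary~\ref{cor:simply-connected-fibers}. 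It is not a consequence of rational chain connectedness (Hacon--McKernan). Your suggested remedy for the torsion also fails. The Bia\l{}ynicki-Birula paving of $\tilde\Pi$ does not restrict to a paving of $f^{-1}(p)$ for a non-fixed point $p$. And torsion-freeness of Chow groups of a singular, reducible fiber gives no control over torsion in $\Pic(F)$ or in $H^2(F,\Z)$. As written, nothing in the plan shows that $M$ is a pullback, even up to a multiple.

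Two inputs close the gap. First, once the fibers are simply connected, $H^2(F,\Z)$ is torsion-free by universal coefficients, which removes your torsion worry. Globally, the Leray spectral sequence then makes $f^*H^2(\Pi,\Z)$ a primitive sublattice of $H^2(\tilde\Pi,\Z)$ (Proposition~\ref{prop:primitive-H2}). Second, you need a rational descent, $M^{\otimes m}\cong f^*\cL$ for some $m>0$. The paper gets this by showing that $\Pic(\Pi)\to A_1(\Pi)^\vee$ has torsion cokernel. It combines $\Pic(\Pi)\otimes\Q\cong A^1(\Pi)\otimes\Q$ (rational singularities) with $A^1(\Pi)\cong A_1(\Pi)^\vee$, which holds because $\Pi$ \emph{itself} is linearly paved (Proposition~\ref{prop:linearly-paving-projected-Richardson}); your plan never uses this. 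Lemma~\ref{l.isom} then gives $M^{\otimes m}\cong f^*\cL$. From $m\,c_1(M)=f^*c_1(\cL)$ and primitivity, $c_1(M)=f^*\alpha$. Since $c_1\colon\Pic(\Pi)\to H^2(\Pi,\Z)$ is an isomorphism (Corollary~\ref{cor:projected-richardson-pic-h2}), $\alpha=c_1(\cL')$, and Lemma~\ref{l.isom} gives $M\cong f^*\cL'$. Your fiberwise version also works once you have these inputs. The relation $m\,c_1(M|_F)=0$ in the torsion-free group $H^2(F,\Z)$ forces $c_1(M|_F)=0$. Then $R^1f_*\O_{\tilde\Pi}=0$ and the exponential sequence give $R^1f_*\O^\times_{\tilde\Pi}\hookrightarrow R^2f_*\Z$, so $M$ is locally a pullback and $M\cong f^*f_*M$. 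This also replaces your formal-neighborhood step, which is not justified as written.
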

 
We prove that  $\Pic(X) \to A_1(X)^\dual$ is injective more generally, for complete normal varieties that admit a resolution by a linearly paved variety, as defined below. For projected Richardson varieties over $\C$, surjectivity follows from being of klt type, via topological arguments.

\medskip

In positive characteristic, we prove that the degree map is an isomorphism away from finitely many primes, and the excluded primes can be controlled as follows. Recall that $A_1(\Pi)$ is generated by the classes of torus-invariant curves \cite[Theorem~1]{fmss}, of which there are only finitely many. Thus, Theorem~\ref{thm:isom} implies that $\Pic(\Pi)$ is a finitely generated free abelian group. Taking $\kk = \overline \Q$ and spreading out, we see that there is a number field $K$, a finite set of rational primes $S$, and a set of generators for $\Pic(\Pi_{\overline \Q})$ defined over the ring of $S$-integers of $K$.

\begin{theorem}\label{t.spreading-out}
Let $S$ be a set of primes such that $\Pic(\Pi_{\overline \Q})$ is generated by line bundles defined over the $S$-integers of a number field. Then the inclusion $\Pic(\Pi_\kk) \to A_1(\Pi_\kk)^\vee$ is an isomorphism whenever $\tchar(\kk) \not \in S$. 
\end{theorem}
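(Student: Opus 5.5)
Since Theorem~\ref{thm:isom} already gives injectivity in every characteristic, it remains to show surjectivity of $\Pic(\Pi_\kk) \to A_1(\Pi_\kk)^\vee$ when $p = \tchar(\kk) \notin S$. The plan is to build enough line bundles on $\Pi_\kk$ by specializing the given generators of $\Pic(\Pi_{\overline\Q})$ through the integral model $\Pi_\Z$, and then to compare the degree pairings in characteristic $0$ and characteristic $p$ using torus-invariant curves. Concretely, I fix the number field $K$, its ring of $S$-integers $\O_{K,S}$, and line bundles $\cL_1,\dots,\cL_r$ on $\Pi_{\O_{K,S}} = \Pi_\Z \times \Spec \O_{K,S}$ whose generic fibers generate $\Pic(\Pi_{\overline\Q})$. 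Choosing a prime $\mathfrak p$ of $\O_{K,S}$ above $p$ and an embedding $\O_{K,S}/\mathfrak p \hookrightarrow \kk$ (we may enlarge $\kk$; I will note that the statement is insensitive to algebraically closed base change, since $A_1$ is generated by torus-invariant curves in every case), I obtain restricted line bundles $\cL_{i,\kk}$ on $\Pi_\kk$. The task is then to show that their degrees span $A_1(\Pi_\kk)^\vee$.

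For this I use \cite[Theorem~1]{fmss}: $A_1(\Pi_\kk)$ is generated by the finitely many $T$-invariant curves. These curves are defined combinatorially (as $T$-stable curves joining $T$-fixed points, namely orbit closures of root subgroups), uniformly over $\Z$. Let $C_{1,\Z},\dots,C_{m,\Z} \subseteq \Pi_\Z$ be the closures of these curves, flat over $\Z$ and each with generic fiber $\P^1$. By flatness and constancy of Euler characteristic in flat families, $\deg(\cL_i|_{C_j})$ is the same in characteristic $0$ and in characteristic $p$. The degree matrix $M = (\deg \cL_i|_{C_j})$ is therefore independent of the characteristic.

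Next I compare the relations among the $[C_j]$. In characteristic $0$, Theorem~\ref{thm:isom} identifies $\Pic(\Pi_{\overline\Q}) \cong A_1(\Pi_{\overline\Q})^\vee$. Thus the row space of $M$ is exactly the set of functionals on $\Z^m$ that vanish on the relation lattice $R_0 = \ker(\Z^m \to A_1(\Pi_{\overline\Q}))$ and factor through the free part; the free part is all of $A_1$ here, because $\Pic$ injects and $A_1^\vee$ is computed by $M$. In characteristic $p$, let $R_p = \ker(\Z^m \to A_1(\Pi_\kk))$. I claim $R_0 \subseteq R_p$, which I will prove by specializing rational equivalences: a relation in characteristic $0$ is witnessed by finitely many rational functions on surfaces, all defined over some $\O_{K,S'}$. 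Fulton's specialization map $A_1(\Pi_{K}) \to A_1(\Pi_{\kk})$ then sends $[C_{j,K}] \mapsto [C_{j,\kk}]$ and respects rational equivalence. With the claim in hand, $A_1(\Pi_\kk)^\vee$ consists of functionals vanishing on $R_p \supseteq R_0$, so every such functional is an integer combination of rows of $M$, i.e.\ it is the degree of some $\bigotimes \cL_{i,\kk}^{a_i}$. This gives surjectivity.

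The main obstacle is justifying the specialization step and the base-change reductions carefully. This requires a flat integral model over a DVR $R$ (a localization of $\O_K$ at $\mathfrak p$) in which the $C_j$ and the line bundles extend; this is exactly where $p \notin S$ enters. One then needs Fulton's specialization $A_*(\Pi_{K}) \to A_*(\Pi_{\bar k_{\mathfrak p}})$ (\S20.3 of Fulton) together with its compatibility with passing to $\overline\Q$ and to $\kk$. For flatness of $\Pi_\Z$ and of the $C_{j,\Z}$ I would appeal to the standard properties of projected Richardson schemes over $\Z$ and their $T$-curves, which are images of $\P^1$'s under root subgroup maps.
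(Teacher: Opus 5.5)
Your proposal is correct and follows essentially the paper's argument. You specialize the $S$-integral generators, note that their degrees on the integral torus-invariant curves $C_j\cong\P^1_\Z$ do not depend on the fiber, and use Fulton's specialization map to get $R_0\subseteq R_p$. Hence $A_1(\Pi_\kk)^\vee=R_p^\perp\subseteq R_0^\perp$, and $R_0^\perp$ is the $\Z$-span of the rows of $M$, i.e.\ the degrees of the specialized bundles.

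Your aside that the ``free part is all of $A_1$'' is not justified; complete linearly paved varieties such as toric surfaces can have torsion in $A_1$. It is also not needed, since $\Z$-valued functionals vanishing on $R_0$ automatically kill any torsion.
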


\begin{corollary} \label{cor.all-char}
If $\Pic(\Pi_{\overline \Q})$ is generated by line bundles defined over $\Z$, then the degree map $\Pic(\Pi_\kk) \to A_1(\Pi_\kk)^\vee$ is an isomorphism for all $\kk$.
\end{corollary}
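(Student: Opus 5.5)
Corollary~\ref{cor.all-char} is a direct consequence of Theorem~\ref{t.spreading-out}. We simply take $S$ to be empty. The plan is to check that the hypothesis ``$\Pic(\Pi_{\overline\Q})$ is generated by line bundles defined over $\Z$'' is a special case of the hypothesis of Theorem~\ref{t.spreading-out} with $S=\emptyset$ and $K=\Q$. Indeed, the ring of $\emptyset$-integers of $\Q$ is $\Z$ itself. Concretely, we are given line bundles $\cL_1,\dots,\cL_r$ on the integral model $\Pi_\Z$ whose base changes to $\overline\Q$ generate $\Pic(\Pi_{\overline\Q})$. This is exactly the input required in Theorem~\ref{t.spreading-out} with $S=\emptyset$.

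Applying Theorem~\ref{t.spreading-out} then gives that $\Pic(\Pi_\kk)\to A_1(\Pi_\kk)^\dual$ is an isomorphism whenever $\tchar(\kk)\notin\emptyset$. For positive characteristic this covers every $\kk$. For $\tchar(\kk)=0$ the conclusion is already the second statement of Theorem~\ref{thm:isom}, so no hypothesis is needed there at all. Alternatively, one can note that Theorem~\ref{t.spreading-out} is phrased to include characteristic zero, since $0\notin S$ for any set of primes. Either way, every algebraically closed field is covered.

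The only point requiring care is a bookkeeping one, and it is not a real obstacle. ``Defined over $\Z$'' should be read as coming from line bundles on the specific integral model $\Pi_\Z$ built from Chevalley group schemes. This is the same model used in \S\ref{sec:charp} to compare characteristics, so that the specializations to $\Pi_\kk$ in Theorem~\ref{t.spreading-out} make sense. With that reading fixed, the hypotheses match verbatim and the corollary follows immediately.
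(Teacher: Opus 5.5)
Your proposal is correct and matches the paper's route: the corollary is just the case $K=\Q$, $S=\emptyset$ of Theorem~\ref{t.spreading-out}, where the ring of $S$-integers is $\Z$. The characteristic-zero case is covered either by that theorem or directly by Theorem~\ref{thm:isom}.
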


\begin{conjecture} \label{conj:integral-generators}
For every projected Richardson variety $\Pi$, the Picard group $\Pic(\Pi_{\overline \Q})$ is generated by line bundles defined over $\Z$.
\end{conjecture}

Conjecture~\ref{conj:integral-generators} implies that the degree map $\Pic(\Pi_\kk) \to A_1(\Pi_\kk)^\vee$ is an isomorphism for every projected Richardson variety, over every algebraically closed field.  The conjecture holds in the special case of Schubert varieties, whose Picard groups are generated by restrictions of line bundles from $G/P$ \cite[\S XII]{mathieu}, and for Richardson varieties in cominuscule $G/P$ not of type $E$, whose Picard groups are generated by pullbacks from a minimal cominuscule flag subvariety \cite{buch}. In both cases the generating line bundles are defined over $\Z$.  

Our work avoids the cominuscule and type hypotheses, and also generalizes from Richardson varieties to projected Richardson varieties, at the cost of potentially excluding finitely many characteristics. Our approach to these questions is also quite different from that of Buch. His resolutions yield precise generators for $A_1$ and make the pairing explicit, whereas our argument is uniform across $G/P$ and trades the specific combinatorics for generalities about the geometry of linear pavings and the topology of klt singularities. It seems likely that future progress will involve a combination of methods.

%%%%%%%%%%%%%%%%%%%%%%%%%%%%%%%%%
\section{Linearly paved varieties}
%%%%%%%%%%%%%%%%%%%%%%%%%%%%%%%%%

A {\it paving} of a variety $X$ is a filtration by closed subsets 
\begin{equation} \label{eq:paving}
X=X_0 \supset X_1 \supset X_2 \supset \cdots \supset X_n = \emptyset.
\end{equation} 
We say that \eqref{eq:paving} is a paving by varieties of a given type if each $X_i \setminus X_{i+1}$ is of that type. For instance, \eqref{eq:paving} is a paving by affine spaces if each $X_i \setminus X_{i+1}$ is isomorphic to an affine space $\A^n$.

Totaro \cite{totaro} and Jannsen \cite{jannsen} studied motivic cohomology of two slightly different classes of varieties that are built inductively out of affine spaces.  Here we consider another variation. Roughly speaking, a {\it linearly paved} variety is one satisfying Totaro's conditions defining {\it linear schemes}, except that where Totaro requires a stratification, we only ask for a paving.

\begin{definition} \label{def.lin-paved}
The class of {\it linearly paved varieties} is characterized inductively as follows:
\begin{itemize}
\item any affine space $\A^n$ is linearly paved;
\item the complement of a linearly paved variety embedded as a closed subvariety of $\A^n$ is linearly paved; and
\item any variety which admits a paving by linearly paved varieties is linearly paved.
\end{itemize}
\end{definition}
In other words, the last condition says that if $X$ admits a paving \eqref{eq:paving} where each $X_i \smallsetminus X_{i+1}$ is a linearly paved variety, then $X$ is linearly paved. This notion sits in between Totaro's linear schemes and Jannsen's {\it linear varieties}. Every linear scheme in the sense of Totaro is linearly paved, and every linearly paved variety is a linear variety in the sense of Jannsen. We note that any variety with a paving by affine spaces is linearly paved. In particular, if a nonsingular projective variety has a torus action with finitely many fixed points, then a generic one-parameter subgroup has the same fixed locus and the associated Bia\l{}ynicki-Birula decomposition gives an affine paving \cite{bb}.

Linearly paved varieties satisfy the {\it Chow-K\"unneth property}: if $X$ is linearly paved and $Y$ is any variety, then the natural homomorphism
\begin{equation}\label{e.chow-kunneth}
 A_*(X) \otimes A_*(Y) \to A_*(X \times Y)
\end{equation}
is an isomorphism.  This is proved by induction over the construction of $X$ in Definition~\ref{def.lin-paved}, exactly as in \cite[\S3]{totaro}.  Following Totaro, the induction carries not only the Chow-K\"unneth isomorphism \eqref{e.chow-kunneth} (his {\it weak} property), but also a {\it strong} property supplementing it with a surjectivity statement for the degree-one higher Chow groups of products.  Affine space has the strong property by homotopy invariance \cite[Lemma~2]{totaro}, it is inherited by the complement of a linearly paved closed subvariety of $\A^n$ \cite[Lemma~3]{totaro}, and the localization long exact sequence then gives the weak property \eqref{e.chow-kunneth} for any variety assembled from such pieces \cite[Lemma~4]{totaro}.  Because each step treats only a single closed subvariety and its open complement, the argument uses only a filtration by closed subsets, and not the closure relations of a stratification, and so applies to linearly paved varieties verbatim. It follows, as in the proof of \cite[Theorem~2]{totaro}, that the natural homomorphism
\begin{equation}\label{e.chow-dual}
  A^1(X) \to A_1(X)^\dual
\end{equation}
is an isomorphism for any complete linearly paved variety $X$.  

\begin{lemma} \label{l.isom}
Let $X$ be a nonsingular, complete, linearly paved variety.  Then the homomorphism $\Pic(X) \to A_1(X)^\dual$ is an isomorphism. 
\end{lemma}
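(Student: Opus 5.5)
The plan is to factor the degree map through operational Chow cohomology and then use the isomorphism \eqref{e.chow-dual}. For any variety $X$ there is the first Chern class homomorphism $c_1\colon \Pic(X) \to A^1(X)$, sending $\cL$ to the operator $c_1(\cL)\cap -$. The natural map $A^1(X) \to A_1(X)^\dual$ sends a class $\alpha$ to the functional $[C] \mapsto \deg(\alpha \cap [C])$. The composite
\[
\Pic(X) \to A^1(X) \to A_1(X)^\dual
\]
is exactly the degree map. Indeed, $\deg(c_1(\cL)\cap [C]) = \deg(\cL|_C)$ by the projection formula applied to $C \hookrightarrow X$, together with the standard identity for line bundles on complete curves. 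So the lemma follows once both maps in the composite are isomorphisms.

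The second map is an isomorphism by \eqref{e.chow-dual}, since $X$ is complete and linearly paved. For the first map, I will use that $X$ is nonsingular of dimension $n$. Then Poincaré duality $A^1(X) \to A_{n-1}(X)$, $\alpha \mapsto \alpha \cap [X]$, is an isomorphism \cite[Cor.~17.4]{fulton}. Composing $c_1$ with this gives $\cL \mapsto c_1(\cL)\cap[X]$, the class of the associated Cartier divisor. On a nonsingular variety, Cartier and Weil divisors coincide, so $\Pic(X) \to A_{n-1}(X)$ is an isomorphism (Fulton, Example~2.1.1 / Hartshorne II.6.16). Hence $c_1\colon \Pic(X)\to A^1(X)$ is an isomorphism, and the lemma follows.

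There is no serious obstacle here; the real work is in \eqref{e.chow-dual}, which we are taking as given from the Chow--Künneth discussion. The one point to check carefully is compatibility: the duality map in \eqref{e.chow-dual} must be the evaluation $\alpha \mapsto (\,[C]\mapsto \deg(\alpha\cap[C])\,)$. Without this, the composite would not literally be the degree map. As in Totaro's argument, this evaluation pairing is indeed the map in question.

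If one wanted to avoid citing Poincaré duality for operational classes, an alternative route to the isomorphism $\Pic(X)\cong A^1(X)$ is available. One can use that on a nonsingular variety every operational class of codimension one is determined by its action on $[X]$. However, citing Fulton's Corollary~17.4 is cleanest.
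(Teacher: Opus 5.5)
Your proposal is correct and is essentially the paper's proof. The paper likewise factors the degree map through $A^1(X)$, uses \eqref{e.chow-dual} for the second map, and uses Cartier $=$ Weil together with Poincaré duality in Fulton to get $\Pic(X)\isom A^1(X)$ for nonsingular $X$; your explicit check that the composite is the degree map is a detail the paper leaves implicit.
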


\begin{proof}
If $X$ is linearly paved, then \eqref{e.chow-dual} is an isomorphism. Combine this with the fact that the canonical homomorphism $\Pic(X) \to A^1(X)$ is an isomorphism whenever $X$ is nonsingular \cite[Ex.~2.1.1, Cor.~14.4(a)]{it}.
\end{proof}

\begin{theorem} \label{thm:linearly-paved-inj}
Suppose a complete normal variety $X$ admits a desingularization $f\colon \tilde{X} \to X$ by a linearly paved variety $\tilde{X}$.  Then the canonical homomorphism
\[
  \Pic(X) \to A_1(X)^\dual,
\]
given by $[\cL] \mapsto ( C \mapsto \deg(\cL|_C) )$, is injective.
\end{theorem}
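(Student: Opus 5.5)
Let $\cL$ be a line bundle on $X$ with $\deg(\cL|_C) = 0$ for every curve $C \subseteq X$. The plan is to show $\cL$ is trivial by pulling back to $\tilde X$ and then descending.

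First, consider $f^*\cL$ on $\tilde X$. For any curve $\tilde C \subseteq \tilde X$, the projection formula gives $\deg(f^*\cL|_{\tilde C}) = \deg(\cL \cdot f_*[\tilde C])$. Here $f_*[\tilde C]$ is either zero, when $\tilde C$ is contracted, or a multiple of a curve class in $X$. In both cases the degree vanishes. So $f^*\cL$ lies in the kernel of $\Pic(\tilde X) \to A_1(\tilde X)^\dual$. The variety $\tilde X$ is nonsingular and linearly paved, and it is complete because $f$ is proper and $X$ is complete. Lemma~\ref{l.isom} therefore gives $f^*\cL \isom \O_{\tilde X}$.

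Next, descend the triviality to $X$. Since $X$ is normal and $f$ is proper and birational, Zariski's main theorem gives $f_*\O_{\tilde X} = \O_X$. The projection formula then yields
\[
\cL \isom \cL \otimes f_*\O_{\tilde X} \isom f_*(f^*\cL) \isom f_*\O_{\tilde X} = \O_X .
\]
Hence $\cL$ is trivial, and the degree map is injective.

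The main point needing care is the descent step, specifically the identity $f_*\O_{\tilde X} = \O_X$. This holds because $f_*\O_{\tilde X}$ is a coherent $\O_X$-algebra that is finite over $\O_X$ and birational to it. Normality of $X$ then forces it to equal $\O_X$; this is Zariski's main theorem in its Stein-factorization form. Everything else is formal. In particular, completeness of $\tilde X$, which Lemma~\ref{l.isom} needs, follows from properness of $f$ as noted above.
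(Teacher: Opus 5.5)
Your proof is correct and is essentially the paper's argument. The paper phrases it as a commutative square: $f^*\colon \Pic(X)\to\Pic(\tilde X)$ is injective because $f_*f^*\cL\cong\cL$ (from $f_*\O_{\tilde X}\cong\O_X$), and $\Pic(\tilde X)\to A_1(\tilde X)^\vee$ is an isomorphism by Lemma~\ref{l.isom}. You unwind that same square elementwise via the projection formula, and you make explicit the small point, left implicit in the paper, that $\tilde X$ is complete because $f$ is proper.
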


\begin{proof}
Consider the diagram
\[
\begin{tikzcd}
 \Pic(\tilde{X}) \ar[r,"\sim"] \ar[from=d,"f^*"] &  A_1(\tilde{X})^\dual \ar[from=d,"f_*^\vee"] \\
 \Pic(X) \ar[r] & A_1(X)^\dual.
\end{tikzcd}
\]
The pullback $f^*\colon \Pic(X) \to \Pic(\tilde{X})$ is injective, because $\cL \cong f_*f^*\cL$ using $f_*\O_{\tilde{X}} \cong \O_X$ (since $X$ is normal and $f$ is proper and birational).  The map $\Pic(\tilde{X}) \to A_1(\tilde{X})^\dual$ is an isomorphism by Lemma~\ref{l.isom}, and the assertion follows. %
\end{proof}

%%%%%%%%%%%%%%%%%%%%%%%%%%%%%%%%%
\section{Richardson and projected Richardson varieties} \label{sec:Richardson}
%%%%%%%%%%%%%%%%%%%%%%%%%%%%%%%%%

Let $G$ denote a connected reductive algebraic group, with maximal torus and Borel subgroup $T \subset B \subset G$.  Let $P \supset B$ be a standard parabolic subgroup and let $B^-$ be the opposite Borel.  Let $W$ be the associated Weyl group and $w_0 \in W$ its longest element.

Recall that a {\it Richardson variety} is an intersection of opposite Schubert varieties in $G/P$:
\[
 X_u^v = X_u \cap X^v.
\]
Here $X_u = \overline{B^-uP/P}$ and $X^v = \overline{BvP/P}$.  The intersection $X_u^v$ is nonempty if and only if $u\leq v$ in Bruhat order. We follow the indexing convention from \cite{kls}: the lower index is the lower endpoint of the Bruhat interval and the upper index is the upper endpoint.

The {\it Richardson cell} corresponding to the Bruhat interval $[u,v]$ is $$(X_u^v)^\circ := B^- uP/P \cap B v P/P.$$ It is an open dense subvariety of $X_u^v$.  
Each Richardson variety $X_u^v$ is preserved by the action of the maximal torus $T \subset B$ and has finitely many $T$-fixed points, indexed by the Bruhat interval $[u,v]$.  Richardson varieties are rational varieties, normal, and have rational resolutions \cite{brion, billey-coskun, kls}.  

Given a Richardson variety $R = X_u^v$, we have the {\it Brion resolution} $\tilde{R} \to R$, which is constructed from a fiber product of Bott-Samelson varieties, as follows.  Given any sequence of simple roots $\underline\gamma = (\gamma_1,\ldots,\gamma_d)$, define
\begin{equation}\label{e.bs}
  Z(\underline\gamma) = G/B \times_{G/P_{\gamma_1}} G/B \times_{G/P_{\gamma_2}} \cdots \times_{G/P_{\gamma_d}} G/B,
\end{equation}
along with $d+1$ projections $\pi_i\colon Z(\underline{\gamma})\to G/B$, for $0 \leq i \leq d$.  (Here $P_\gamma$ is the minimal parabolic subgroup containing $B$ and having $-\gamma$ as a root.)  Let $\underline\beta = (\beta_1,\ldots,\beta_\ell)$ be a reduced word for $v$, considered as a sequence of simple roots.  Let $\underline\alpha = (\alpha_1,\ldots,\alpha_k)$ be a reduced word for $w_0 u$, and write $\underline\alpha^{-1} = (\alpha_k,\ldots,\alpha_1)$ for the reversed word.  The Brion resolution is
\begin{equation}
  \tilde{R} := \pi_0^{-1}(eB) \cap \pi_{k+\ell}^{-1}(w_0 B) \subset Z(\underline\beta,\underline\alpha^{-1}),
\end{equation}
along with $\phi\colon \tilde{R}\to G/B$, the restriction of $\pi_\ell$.  The image of this map is a Richardson variety $R = X_u^v$, and it is an isomorphism over the open Richardson cell $R^\circ \subset R$.  The variety $\tilde{R}$ is sometimes called a \emph{brick manifold}; see \cite{balan,escobar,kls} for further details.

The variety $\tilde{R}$ is projective, nonsingular, and it has finitely many $T$-fixed points; therefore it admits an affine paving by Bia\l ynicki-Birula cells, and in particular it is linearly paved.

Our results are stated and proved for the more general {\it projected Richardson varieties} $\Pi\subseteq G/P$. By definition, $\Pi$ is the image of some Richardson $X_u^v \subseteq G/B$ under the projection $\pi\colon G/B \to G/P$. We refer the reader to \cite{kls} for further details and references, and record the following basic facts. Every Richardson variety is a projected Richardson variety. For each projected Richardson variety $\Pi$, one can choose $u$ and $v$ so that the projection $\pi\colon X_u^v \to \Pi$ is birational. Composing with the Brion resolution, we obtain a desingularization $\tilde{R} \to \Pi$, where $\tilde R$ is linearly paved. 

Recall, e.g. from \cite{kls}, that a resolution of singularities $f \colon Y \to X$ is called \emph{rational} if $Rf_* \mathcal{O}_Y = \mathcal{O}_X$ and $Rf_* \omega_Y = \omega_X$. 

\begin{proposition}[{\cite[Theorem~1.1]{billey-coskun}, \cite[Theorem~4.8]{kls}}] \label{p.kls}
A projected Richardson variety $\Pi$ is a rational variety and the resolution $\tilde{R} \to \Pi$ is a rational resolution. In particular, every projected Richardson variety is normal and projected Richardson varieties over $\C$ have rational singularities.
\end{proposition}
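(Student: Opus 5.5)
The proposition combines two cited results, so the plan is to reduce to them and then deduce the "in particular" statements formally.

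First, I will choose $u,v$ so that $\pi\colon X_u^v \to \Pi$ is birational, as recalled above. The resolution $\tilde R \to \Pi$ is then the composite
\[
\tilde R \xrightarrow{\ \phi\ } X_u^v \xrightarrow{\ \pi\ } \Pi .
\]

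Next, I will establish that this composite is a rational resolution, meaning
\[
R(\pi\circ\phi)_*\O_{\tilde R}=\O_\Pi \quad\text{and}\quad R(\pi\circ\phi)_*\omega_{\tilde R}=\omega_\Pi .
\]
For $\phi$, this is Brion's theorem that Richardson varieties in $G/B$ have rational resolutions given by the brick manifold \cite{brion}; its proof rests on Frobenius splitting in positive characteristic and Kawamata--Viehweg vanishing in characteristic zero. For $\pi$, I will use \cite[Theorem~4.8]{kls}, which states that $R\pi_*\O_{X_u^v}=\O_\Pi$ and $R\pi_*\omega_{X_u^v}=\omega_\Pi$ for a birational projection of a Richardson variety. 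The composite statement then follows from the Grothendieck spectral sequence, i.e. $R(\pi\phi)_*=R\pi_*R\phi_*$, applied to both $\O$ and $\omega$.

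Rationality of $\Pi$ as a variety is immediate from birationality with $X_u^v$. The variety $X_u^v$ is in turn birational to its open cell $(X_u^v)^\circ$, which is rational by \cite[Theorem~1.1]{billey-coskun}; one can also see this directly, since it is an open subset of the affine space $BvB/B$.

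For the "in particular" statements:
\begin{itemize}
\item Normality follows from $f_*\O_{\tilde R}=\O_\Pi$ with $\tilde R$ normal. A local section of $\O$ on the normalization pulls back to a regular function on $\tilde R$, hence lies in $f_*\O_{\tilde R}=\O_\Pi$.
\item Over $\C$, a variety admitting one rational resolution has rational singularities, by the standard definition via any resolution. The independence of the choice of resolution follows from Grauert--Riemenschneider vanishing.
\end{itemize}

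The main obstacle is the middle step, the vanishing $R^i\pi_*\O_{X_u^v}=0$ for $i>0$ together with the matching statement for $\omega$, in arbitrary characteristic. For this I would rely on the compatible Frobenius splitting of $G/B$ along all Richardson varieties, pushed forward to a splitting of $G/P$ compatible with $\Pi$, exactly as in \cite{kls}. I would not attempt to reprove it here.
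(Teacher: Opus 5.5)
Your proposal is correct and matches the paper, which gives no argument of its own and simply cites Billey--Coskun and \cite[Theorem~4.8]{kls} for the rationality of the composite $\tilde R\to X_u^v\to\Pi$; your factorization through Brion's theorem for $\phi$, the corresponding statements for $\pi$, and $R(\pi\phi)_*=R\pi_*R\phi_*$ just unpacks that citation, and your deductions of normality and of rational singularities over $\C$ are sound. One aside is false: $(X_u^v)^\circ=B^-uB/B\cap BvB/B$ has dimension $\ell(v)-\ell(u)$, so it is not open in $BvB/B$ unless $u=e$, and a direct proof of rationality should instead use the dense Deodhar piece $\G_m^{\ell(v)-\ell(u)}$ of the open Richardson cell.
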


\begin{corollary}
Let $\Pi$ be a projected Richardson variety. Then $\Pic(\Pi) \to A_1(\Pi)^\dual$ is injective.
\end{corollary}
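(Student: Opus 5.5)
The plan is to deduce the corollary directly from Theorem~\ref{thm:linearly-paved-inj}. That theorem needs three inputs: $\Pi$ is complete, $\Pi$ is normal, and $\Pi$ has a desingularization by a linearly paved variety. I will check these in turn.

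\emph{Completeness.} $\Pi$ is the image of the Richardson variety $X_u^v$ under the projection $\pi\colon G/B \to G/P$. Since $X_u^v$ is closed in the projective variety $G/B$ and $\pi$ is proper, $\Pi$ is closed in $G/P$. Hence $\Pi$ is projective, and in particular complete.

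\emph{Normality.} This is part of Proposition~\ref{p.kls}, which holds in every characteristic. Concretely, from $Rf_*\O_{\tilde R} = \O_\Pi$ we get $f_*\O_{\tilde R} = \O_\Pi$, and this is exactly the input to the injectivity of $f^*$ on Picard groups used in the proof of Theorem~\ref{thm:linearly-paved-inj}.

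\emph{Desingularization.} As recorded above, choose $u \le v$ so that $\pi|_{X_u^v}\colon X_u^v \to \Pi$ is birational. Compose it with the Brion resolution $\phi\colon \tilde R \to X_u^v$, which is an isomorphism over the open Richardson cell. The composite $f\colon \tilde R \to \Pi$ is then proper and birational, and $\tilde R$ is nonsingular and projective. Moreover, $\tilde R$ has a $T$-action with finitely many fixed points, so a generic one-parameter subgroup gives a Bia\l ynicki-Birula affine paving. An affinely paved variety is linearly paved by Definition~\ref{def.lin-paved}. Thus $f$ is a desingularization by a linearly paved variety.

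With all three hypotheses in place, Theorem~\ref{thm:linearly-paved-inj} gives injectivity of $\Pic(\Pi) \to A_1(\Pi)^\dual$.

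The main thing to watch is that everything holds in arbitrary characteristic. Normality must come from the characteristic-free statement of \cite{kls} (via Frobenius splitting) rather than from a characteristic-zero argument. Birationality of the composite must hold over the open cell, which the chosen $u, v$ guarantee. Once these are granted, the argument is a direct verification.
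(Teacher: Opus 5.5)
Your proposal is correct and follows the paper's proof, which deduces the corollary directly from Theorem~\ref{thm:linearly-paved-inj} and the normality in Proposition~\ref{p.kls}. The linearly paved desingularization is the Brion resolution composed with a birational projection $X_u^v \to \Pi$, exactly as you describe. Your write-up only makes explicit the hypothesis checks (completeness, normality, linear paving of $\tilde R$) that the paper leaves implicit.
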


\begin{proof}
This is an immediate consequence of Theorem~\ref{thm:linearly-paved-inj} and Proposition~\ref{p.kls}.
\end{proof}

Applying Lemma~\ref{l.isom} to the smooth, complete linearly paved variety $\tilde{R}$, we obtain:

\begin{corollary}
Any projected Richardson variety $\Pi$ has a desingularization $\tilde{R} \to \Pi$ such that $\Pic(\tilde{R}) \to A_1(\tilde{R})^\dual$ is an isomorphism. 
\end{corollary}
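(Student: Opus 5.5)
The plan is to combine two facts already established in the paper.

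First, by the discussion preceding Proposition~\ref{p.kls}, for any projected Richardson variety $\Pi \subseteq G/P$ we may choose $u \le v$ so that the projection $\pi\colon X_u^v \to \Pi$ is birational. Composing with the Brion resolution $\phi\colon \tilde{R} \to X_u^v$ gives a proper birational morphism $f = \pi\circ\phi\colon \tilde{R} \to \Pi$. Since $\tilde{R}$ is nonsingular, $f$ is a desingularization, and this is exactly the resolution referred to in Proposition~\ref{p.kls}.

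Second, we use the properties of $\tilde{R}$. It is a closed subvariety of the fiber product $Z(\underline\beta,\underline\alpha^{-1})$ of copies of $G/B$, hence projective and in particular complete. It is nonsingular and has a $T$-action with finitely many fixed points. So, as noted just before the statement, a generic one-parameter subgroup of $T$ has the same fixed locus, and the Bia\l ynicki-Birula decomposition gives a paving of $\tilde{R}$ by affine spaces. By Definition~\ref{def.lin-paved}, a paving by affine spaces is a paving by linearly paved varieties, so $\tilde{R}$ is linearly paved.

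With these in hand, apply Lemma~\ref{l.isom} directly to $X = \tilde{R}$, which is nonsingular, complete and linearly paved. It gives that $\Pic(\tilde{R}) \to A_1(\tilde{R})^\dual$ is an isomorphism. Combined with the first step, $f\colon \tilde{R} \to \Pi$ is the required desingularization.

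There is no real obstacle. The only points needing care are already addressed in the text preceding the statement: that $\pi$ can be chosen birational, and that the Bia\l ynicki-Birula cells really form a filtration by closed subsets, which holds because $\tilde{R}$ is projective and the cells can be ordered compatibly with closures.
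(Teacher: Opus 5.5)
Your proposal is correct and matches the paper's argument. The paper likewise composes the Brion resolution with a birational projection $X_u^v \to \Pi$, observes that $\tilde{R}$ is smooth, projective and linearly paved via its Bia\l ynicki-Birula affine paving, and then applies Lemma~\ref{l.isom} to $\tilde{R}$.
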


%Thus, the conclusion of \cite[Theorem~1.4]{buch} holds for all projected Richardson varieties.  
This answers a question raised by Buch \cite{buch}.

%%%%%%%%%%%%%%%%%%%%%%%%%%%%%%%%%
\section{Linear paving of projected Richardson varieties}
%%%%%%%%%%%%%%%%%%%%%%%%%%%%%%%%%

In this section, we prove that every projected Richardson variety $\Pi \subset G/P$ is linearly paved. Our proof for Richardson varieties uses the Deodhar decomposition \cite{deodhar}, which expresses a Richardson variety $R \subset G/B$ as a disjoint union of locally closed cells of the form $\A^a \times \G_m^b$. Note that these cells do not assemble into a stratification, because the closure of a cell need not be a union of cells. See  \cite{dudas} and \cite[\S4.3]{speyer}. 

For projected Richardson varieties, we also use the projected Richardson stratification of Knutson--Lam--Speyer \cite{kls}. For $u\leq v$ in $W$, write
\[
  R^\circ(u,v) :=B^-uB/B\cap BvB/B\subset G/B
\]
for the corresponding open Richardson cell.

\begin{lemma}\label{lem:linear-paving-open-Richardson}
Every open Richardson cell in $G/B$ is linearly paved.
\end{lemma}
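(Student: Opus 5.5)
The plan is to use the Deodhar decomposition of $R^\circ(u,v)$ and arrange its pieces into a filtration by closed subsets, then show each piece $\A^a\times\G_m^b$ is linearly paved. Recall that Deodhar \cite{deodhar} fixes a reduced word $\underline\beta$ for $v$. He writes $R^\circ(u,v)$ as a finite disjoint union of locally closed subvarieties $D_{\underline\sigma}\cong \A^{a}\times\G_m^{b}$. Here $\underline\sigma$ ranges over distinguished subexpressions of $\underline\beta$ for $u$, $b$ is the number of positions where the subexpression ``stays'' (the factor is $1$ but one could go down), and $a$ is the number of positions where it goes down. So two things must be shown: that the pieces can be ordered into a paving, and that tori times affine spaces are linearly paved.

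First, the pieces. $\G_m$ is the complement in $\A^1$ of the point $\{0\}\cong\A^0$, which is linearly paved. So $\G_m$ is linearly paved by the second clause of Definition~\ref{def.lin-paved}. For products I would avoid a general product lemma and argue directly. The torus $\G_m^b$ is the complement in $\A^b$ of the union of coordinate hyperplanes. Filter $\A^b$ by the closed subsets $Z_j=\{x: \text{at least } j \text{ coordinates vanish}\}$. Then $Z_j\setminus Z_{j+1}$ is a disjoint union of pieces $\A^0\times\G_m^{b-j}$ (up to reordering coordinates), which are linearly paved by induction on $b$. Refining this by ordering the components arbitrarily gives a paving of the hyperplane arrangement $Z_1$ by linearly paved varieties. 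Hence $Z_1$ is linearly paved and $\G_m^b=\A^b\setminus Z_1$ is linearly paved. Multiplying everything by $\A^a$ (the same filtration times $\A^a$, embedded in $\A^{a+b}$) gives the claim for $\A^a\times\G_m^b$. Note that disjoint unions of linearly paved varieties are linearly paved, by pavings that remove one component at a time, since components of a disjoint union are open and closed.

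Second, the main obstacle is ordering the Deodhar pieces so that $X_i=\bigcup_{j\ge i}D_j$ is closed in $R^\circ(u,v)$ for all $i$. The closure of a Deodhar cell need not be a union of cells, so we cannot use a closure order. Instead I would use the inductive, column-by-column structure of Deodhar's construction. Write $\underline\beta=(\beta_1,\dots,\beta_\ell)$ and realize $R^\circ(u,v)$ inside the open Bott--Samelson cell $\cong \A^\ell$ of sequences of flags $eB=F_0\to F_1\to\cdots\to F_\ell$ with relative positions $s_{\beta_i}$. On it consider the maps $g_i$ recording the $B^-$-orbit (relative position with respect to the opposite flag) of $F_i$. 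Each $g_i$ is upper semicontinuous for Bruhat order, since the $B^-$-orbits form a stratification of $G/B$ with closures given by Bruhat order. Deodhar's cells are exactly the fibers of $(g_1,\dots,g_\ell)$ over the distinguished sequences.

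I would order the pieces lexicographically by the tuple $(\ell(g_1),\ell(g_2),\dots)$, taking sequences with larger relative positions first at earlier indices. Then check that the union of the later pieces is closed. Take a point in the closure of such a union. Upper semicontinuity of $g_1$ bounds its first coordinate. If there is equality there, we restrict to the locally closed locus where $g_1$ is constant and apply semicontinuity of $g_2$ on it, and so on. This argument is the delicate part. Lexicographic order is compatible with closures only along loci where earlier coordinates are fixed, and making this rigorous needs the fact that each step $F_{i-1}\mapsto F_i$ is an $\A^1$ (a $\P^1$ minus a point) fibration. If this fails, the fallback is to induct on $\ell(v)$. We would use the fibration $R^\circ(u,v)\to$ (one step of the flag) and split $R^\circ(u,v)$ into the closed piece where $F_{\ell-1}$ lies in one $B^-$-orbit and its open complement. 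Each of these is isomorphic to a product of $\A^1$ or $\G_m$ with a smaller open Richardson cell. Linear pavedness of the two pieces then follows by induction together with the product argument above.
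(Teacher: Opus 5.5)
Your proposal is correct and has the same skeleton as the paper's proof: the Deodhar pieces $\A^a\times\G_m^b$ are ordered as the successive differences of a filtration of $R^\circ(u,v)$ by closed subsets. The difference is in how much is proved. The paper cites Dudas (his Theorem~2.1 and Lemma~2.5) for both the decomposition and the existence of such an ordering, and takes for granted that $\A^a\times\G_m^b$ is linearly paved. You check the latter by the coordinate-hyperplane induction, which is fine, and you prove the ordering yourself. So your route is self-contained where the paper's is a citation.

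Your lexicographic argument is less delicate than you fear and needs no fibration structure. Write $\tau=(\ell(g_1),\dots,\ell(g_{\ell(v)}))$. Distinct distinguished subexpressions have distinct $\tau$, because $g_k\in\{g_{k-1},g_{k-1}s_{\beta_k}\}$. Each locus $\{\ell(g_j)\ge m\}$ is closed in the whole Bott--Samelson cell. Indeed, it is the preimage under the morphism $F\mapsto F_j$ of $\bigcup_{\ell(w)=m}\overline{B^-wB/B}$, using that every $w'$ with $\ell(w')\ge m$ lies above some $w$ of length $m$. Now suppose $\tau(x)<_{\mathrm{lex}}\mu$, and let $k$ be the first index with $\tau_k(x)\ne\mu_k$. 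Then $\{y:\ell(g_j(y))\le\ell(g_j(x))\text{ for all } j\le k\}$ is an open neighbourhood of $x$ disjoint from $\{\tau\ge_{\mathrm{lex}}\mu\}$. Hence every tail $\{\tau\ge_{\mathrm{lex}}\mu\}$ is closed, which is exactly the required filtration.

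A few points to fix or tighten:
\begin{itemize}
\item State the direction of the order explicitly: pieces with \emph{smaller} $\ell(g_j)$ (larger $B^-$-orbit) at the first differing index come first. Read literally, your phrase ``larger relative positions first'' gives the opposite order, whose tails are open.
\item With this, the fallback induction is unnecessary. If you keep it, run the induction on $R^\circ(u,v)\times\A^a\times\G_m^b$. Your product argument covers only $\A^a\times\G_m^b$ itself, not an arbitrary linearly paved variety times $\G_m$.
\item A harmless slip: in a distinguished subexpression, a ``stay'' (contributing a $\G_m$ factor) occurs only when $g_{k-1}s_{\beta_k}>g_{k-1}$, i.e.\ when one could go up, not down.
\end{itemize}
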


\begin{proof}
Fix a reduced word for $v$.  Deodhar's decomposition writes $R^\circ(u,v)$ as a finite disjoint union of pieces isomorphic to $\A^a\times \G_m^b$ \cite[Theorem~2.1]{Dudas09}.  These pieces can be ordered as the successive differences in a filtration of
$R^\circ(u,v)$ by closed subsets \cite[Lemma~2.5]{Dudas09}.  Since the pieces are linearly paved, so is $R^\circ(u,v)$.
\end{proof}

Now let $P\supset B$ be a parabolic subgroup, and let $\pi\colon G/B\to G/P$ be the projection.  Let $W_P$ be the Weyl group of $P$,
and let $W^P$ be the set of minimal representatives for $W/W_P$.  For a $P$-Bruhat interval $u\leq_P v$, put
\[
  (\Pi^v_u)^\circ:=\pi\bigl(R^\circ(u,v)\bigr) \quad\text{and}\quad \Pi^v_u:=\bar{(\Pi^v_u)^\circ}.
\]
After replacing a $P$-Bruhat interval by its equivalent representative with upper endpoint in $W^P$, every projected Richardson variety is of this form.

\begin{proposition}\label{prop:linearly-paving-projected-Richardson}
Every projected Richardson variety in $G/P$ is linearly paved.
\end{proposition}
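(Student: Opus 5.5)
The plan is to assemble a paving of $\Pi^v_u$ from the Knutson--Lam--Speyer projected Richardson stratification, and then to show each open stratum is linearly paved by identifying it with an open Richardson cell in $G/B$, where Lemma~\ref{lem:linear-paving-open-Richardson} applies.

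\emph{Step 1.} By \cite{kls}, $\Pi^v_u$ is a finite disjoint union of open projected Richardson strata $(\Pi^{v'}_{u'})^\circ$, indexed by $P$-Bruhat intervals $[u',v']$ with $u\le u'\le_P v'\le v$ in the appropriate order. Each closure $\Pi^{v'}_{u'}$ is a union of such strata, so the decomposition is a genuine stratification. Choose a linear extension of the closure order, refining it by dimension: list the strata as $S_1,\dots,S_m$ so that each $S_j$ is open in $S_j\cup S_{j+1}\cup\cdots\cup S_m$. This can be arranged by always removing a stratum that is maximal among those remaining. Setting $X_i=\bigcup_{j>i}S_j$ then gives closed subsets with $X_i\setminus X_{i+1}=S_{i+1}$, which is a paving in the sense of \eqref{eq:paving}. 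By the third clause of Definition~\ref{def.lin-paved}, it now suffices to show that each open stratum $(\Pi^{v'}_{u'})^\circ$ is linearly paved.

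\emph{Step 2.} After normalizing so that $v'\in W^P$, the key input from \cite[Prop.~3.3 / Lemma~3.1]{kls} is that $\pi$ restricts to an isomorphism $R^\circ(u',v')\xrightarrow{\sim}(\Pi^{v'}_{u'})^\circ$. This holds because, for $u'\le_P v'$, the projection is injective on the open Richardson cell and its image is the stratum. So each open stratum is isomorphic to an open Richardson cell in $G/B$, which is linearly paved by Lemma~\ref{lem:linear-paving-open-Richardson}. Linear pavedness is evidently invariant under isomorphism, so this completes the argument.

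The main obstacle is Step 2, namely pinning down the correct representative of the $P$-Bruhat interval so that $\pi|_{R^\circ(u',v')}$ is an isomorphism onto its image, rather than merely bijective or birational. I would cite the KLS statement that for $u\le_P v$ the map $R^\circ(u,v)\to(\Pi^v_u)^\circ$ is an isomorphism. Failing a direct citation, I would deduce it from injectivity together with the normality of the target: the open strata are smooth, being isomorphic to open Richardson cells by the local product structure, so Zariski's main theorem turns a bijective birational morphism onto a normal variety into an isomorphism. A secondary point is that the strata must be ordered compatibly with closures. This is standard once one knows that the closure of each stratum is a union of strata, which is part of the KLS stratification theorem.
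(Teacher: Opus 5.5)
Your proposal is correct and follows essentially the same route as the paper. The KLS decomposition $\Pi^v_u=\coprod_{u\leq u'\leq_P v'\leq v}(\Pi^{v'}_{u'})^\circ$, in which each closure is a union of strata, is ordered compatibly with closures to give a closed filtration; each stratum is then identified with an open Richardson cell via \cite[Lemma~3.1 and Corollary~3.2]{kls}, so Lemma~\ref{lem:linear-paving-open-Richardson} applies (and your Zariski main theorem fallback is not needed, since these cited results already give the isomorphism).
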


\begin{proof}
Let $\Pi=\Pi^v_u$, with $v\in W^P$.  Knutson--Lam--Speyer show that the open projected Richardson varieties form a finite stratification of $G/P$, and 
\[
  \Pi^v_u=
  \coprod_{u\leq u'\leq_P v'\leq v} (\Pi^{v'}_{u'})^\circ .
\]
They also show that the closure of each stratum is a union of strata \cite[Proposition~3.6]{kls}.

Each stratum in this decomposition is linearly paved.  Indeed, if $u'\leq_P v'$, then
\[
  R^\circ(u',v')\xrightarrow{\sim} (\Pi^{v'}_{u'})^\circ,
\]
by \cite[Lemma~3.1 and Corollary~3.2]{kls}, so the claim follows from Lemma~\ref{lem:linear-paving-open-Richardson}.

Let the strata of $\Pi$ be $S_0,S_1,\ldots,S_N$, ordered so that $S_j\subset \bar S_i\setminus S_i$ implies $i<j$.  Put
\[
  Y_i=\bigcup_{j\geq i} S_j .
\]
By the closure relation above, each $Y_i$ is closed in $\Pi$.  Thus
\[
  \Pi=Y_0\supset Y_1\supset\cdots\supset Y_{N+1}=\emptyset
\]
is a paving by linearly paved varieties.  Hence $\Pi$ is linearly paved.
\end{proof}

%%%%%%%%%%%%%%%%%%%%%%%%%%%%%%%%%
\section{Proof of the main theorem in characteristic zero}
%%%%%%%%%%%%%%%%%%%%%%%%%%%%%%%%%

In this section, we show that the inclusion $\Pic(\Pi_\kk) \hookrightarrow A_1(\Pi_\kk)^\vee$ is an isomorphism when $\tchar(\kk) = 0$, which completes the proof of Theorem~\ref{thm:isom}. Because this property is invariant under extensions of algebraically closed fields, we may assume that the ground field is $\C$ and use techniques from complex geometry and topology.

\begin{proposition} \label{prop:chern-iso}
Let $X$ be a projective rationally connected variety with rational singularities over $\C$.  Then the Chern class homomorphism $c_1\colon \Pic(X) \to H^2(X,\Z)$ is an isomorphism.
\end{proposition}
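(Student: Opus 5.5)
The plan is to use the exponential sequence on $X$ in the analytic topology. From $0 \to \Z \to \O_X^{an} \to (\O_X^{an})^* \to 0$ we obtain the exact sequence
\[
H^1(X,\O_X^{an}) \to H^1(X,(\O_X^{an})^*) \xrightarrow{c_1} H^2(X,\Z) \to H^2(X,\O_X^{an}).
\]
Since $X$ is projective, GAGA identifies $H^1(X,(\O_X^{an})^*)$ with $\Pic(X)$ and $H^i(X,\O_X^{an})$ with $H^i(X,\O_X)$. It therefore suffices to prove $H^1(X,\O_X) = H^2(X,\O_X) = 0$. Given these vanishings, $c_1$ is injective because $H^1(X,\O_X)=0$, and surjective because $H^2(X,\O_X)=0$.

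To prove the vanishing, take a resolution of singularities $f\colon Y \to X$. Because $X$ has rational singularities, $Rf_*\O_Y = \O_X$, and the Leray spectral sequence gives $H^i(X,\O_X) \cong H^i(Y,\O_Y)$ for all $i$. Rational connectedness is a birational property among proper varieties: a rational curve through two general points of $X$ lifts, via its strict transform, to a rational curve through two general points of $Y$. Hence $Y$ is a smooth projective rationally connected variety over $\C$. For such $Y$ one has $H^0(Y,\Omega_Y^{i}) = 0$ for all $i>0$ (Koll\'ar--Miyaoka--Mori): a very free rational curve $g\colon \P^1 \to Y$ has $g^*\Omega_Y$ a sum of line bundles of negative degree, so every $i$-form vanishes along a family of curves covering $Y$. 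Hodge symmetry then gives $H^i(Y,\O_Y) \cong \overline{H^0(Y,\Omega_Y^i)} = 0$ for $i=1,2$, and consequently $H^1(X,\O_X) = H^2(X,\O_X) = 0$.

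The one step needing care is the GAGA and rationality input on the singular space $X$. GAGA holds for coherent sheaves on any projective scheme, and $\Pic(X)$ equals $H^1(X,\O_X^*)$ in both the Zariski and the analytic settings, so this part should pose no real difficulty. The substantive inputs are the vanishing $H^0(Y,\Omega^i_Y)=0$ for rationally connected $Y$ and the identity $Rf_*\O_Y=\O_X$; both are standard, and I would cite them rather than reprove them. Note that we never need $H^2(X,\Z)$ to be torsion-free: the exact sequence identifies $\Pic(X)$ with $H^2(X,\Z)$ integrally.
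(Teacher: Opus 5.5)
Your proposal is correct and follows the paper's proof. You pass to a projective resolution, get $H^i(X,\O_X)=0$ for $i=1,2$ from rational connectedness (Koll\'ar--Miyaoka--Mori), rational singularities and the Leray spectral sequence, and then read off the isomorphism from the analytic exponential sequence. The only difference is that you unpack the vanishing of $H^i(Y,\O_Y)$ via very free curves and Hodge symmetry, where the paper simply cites it.
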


\begin{proof}
Let $f\colon \tilde{X} \to X$ be a projective desingularization. Note that $\tilde{X}$ is rationally connected.  We then have $H^i(\tilde{X},\O_{\tilde{X}}) = 0$ for all $i>0$, by \cite{kmm}.  Since $X$ has rational singularities, we have $R^if_*\O_{\tilde{X}} = 0$ for $i>0$, and hence $H^i(X,\O_X)=0$ for $i>0$, by the Leray spectral sequence.  Now, pass to the analytic topology and apply the exponential sequence to obtain
\[
  0 = H^1(X,\O_X) \to H^1(X,\O_X^{\times}) \xrightarrow{c_1} H^2(X,\Z) \to H^2(X,\O_X) = 0,
\]
and note that $H^1(X,\O_X^{\times}) \isom \Pic(X)$.
\end{proof}

\begin{corollary}\label{cor:projected-richardson-pic-h2}
Let $\Pi_\C$ be a projected Richardson variety over the complex numbers. Then the Chern class homomorphism $c_1\colon \Pic(\Pi_\C)\longrightarrow H^2(\Pi_\C,\Z)$ is an isomorphism.
\end{corollary}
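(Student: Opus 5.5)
The plan is to check that $\Pi_\C$ satisfies the hypotheses of Proposition~\ref{prop:chern-iso}: it is projective, rationally connected, and has rational singularities.

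Projectivity is immediate. $\Pi_\C$ is a closed subvariety of the projective variety $G/P$, being the closure of $(\Pi^v_u)^\circ$, equivalently the image of the projective variety $X_u^v$ under the proper map $\pi$.

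Rational singularities are supplied directly by Proposition~\ref{p.kls}. It states that $\tilde R \to \Pi$ is a rational resolution, so $\Pi_\C$ has rational singularities.

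Rational connectedness is the only hypothesis needing a word. By Proposition~\ref{p.kls}, $\Pi$ is a rational variety, and I would argue that any projective rational variety over $\C$ is rationally connected. Two general points lie in the open set where $\Pi$ is isomorphic to an open subset of $\P^n$. A line through their preimages in $\P^n$ gives a rational curve in $\Pi$ joining them, after taking closure.

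One point needs care, namely which notion of rational connectedness Proposition~\ref{prop:chern-iso} uses for singular $X$. Its proof only uses that a desingularization $\tilde X$ is rationally connected, which holds because rational connectedness is a birational invariant among smooth projective varieties. Here we even have a concrete model: $\tilde R$ is smooth and projective, birational to $\Pi$ (so rational), and hence rationally connected. If needed, one could bypass rational connectedness altogether, since the proof of Proposition~\ref{prop:chern-iso} only requires $H^i(\tilde X,\O_{\tilde X})=0$ for $i>0$. This vanishing holds for $\tilde R$ because it is smooth, projective and rational: $H^i(\O)$ is a birational invariant of smooth projective varieties and vanishes on $\P^n$.

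With all hypotheses verified, Proposition~\ref{prop:chern-iso} gives that $c_1\colon\Pic(\Pi_\C)\to H^2(\Pi_\C,\Z)$ is an isomorphism. I expect no step to be a real obstacle. The only subtlety is the definition of rational connectedness for singular varieties, which the alternative via $\tilde R$ avoids.
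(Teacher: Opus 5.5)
Your proposal is correct and matches the paper, whose proof just combines Proposition~\ref{p.kls} (rationality, hence rational connectedness, and rational singularities) with Proposition~\ref{prop:chern-iso}. Your added remarks on rational connectedness for singular varieties, and on the direct vanishing of $H^i(\tilde R,\O_{\tilde R})$ by birational invariance, are valid refinements that the paper leaves implicit.
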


\begin{proof}
This is an immediate consequence of Propositions~\ref{p.kls} and \ref{prop:chern-iso}.
\end{proof}

\begin{proposition} \label{prop:logFano}
Let $\Pi_\C$ be a projected Richardson variety over $\C$. There is an effective $\Q$-divisor $\Delta$ such that the pair $(\Pi_\C,\Delta)$ is klt. Moreover, $\Delta$ may be chosen so that $-(K_{\Pi_\C} + \Delta)$ is ample.
\end{proposition}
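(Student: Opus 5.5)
Our plan is to construct $\Delta$ from the boundary of the open projected Richardson variety, using the Knutson--Lam--Speyer description of $\Pi$ as the image of a Richardson variety $R=X_u^v\subseteq G/B$, and to deduce klt from the Frobenius splitting / rational singularities structure. First we treat the anticanonical divisor. It is known \cite{kls} that $\Pi$ is Cohen--Macaulay and normal (Proposition~\ref{p.kls}), and that its boundary $\partial\Pi=\Pi\setminus\Pi^\circ$, a reduced Weil divisor made up of the codimension-one projected Richardson subvarieties, is anticanonical: $K_\Pi+\partial\Pi\sim 0$. For Richardson varieties this follows from the standard formula $\omega_{X_u^v}\cong\O(-\partial X_u\cap X^v-X_u\cap\partial X^v)$. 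For projected Richardsons we transport it along the birational map $\pi\colon R\to\Pi$, whose exceptional locus lies in $\partial R$. Moreover, $(\Pi,\partial\Pi)$ is log canonical. Indeed, $\Pi$ is Frobenius split compatibly with $\partial\Pi$ in almost all characteristics, by the splitting of $G/P$ compatibly splitting all projected Richardsons. By Hara--Watanabe-type reduction mod $p$, or directly by the log resolution given by the Brion resolution, on which the preimage of the boundary is simple normal crossings, the pair $(\Pi,\partial\Pi)$ is lc with $K+\partial\Pi$ Cartier and trivial.

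Next we perturb to klt with ample antilog canonical class. The boundary divisors $D_1,\dots,D_m$ of $\Pi$ support an ample divisor: the Bruhat-divisor combination $\sum a_iD_i$ with $a_i>0$ realizes the restriction of an ample line bundle on $G/P$, or on $G/B$ pushed forward suitably. More precisely, we take an ample $\O(\lambda)$ on $G/P$ with a $B$-eigensection and an ample $\O(\mu)$ with a $B^-$-eigensection. Their restrictions to $\Pi$ have divisors of zeros supported on $\partial\Pi$ with all coefficients positive, after possibly adding a combination to cover every boundary component. Call this $A=\sum a_iD_i$, which is an ample $\Q$-Cartier divisor. Now set $\Delta=\partial\Pi-\epsilon A$ for small rational $\epsilon>0$. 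Then $\Delta$ is effective with all coefficients $<1$, and $-(K_\Pi+\Delta)\sim_\Q\epsilon A$ is ample.

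It remains to check that $(\Pi,\Delta)$ is klt, and we do this on the Brion resolution $\phi\colon\tilde R\to\Pi$. We write $K_{\tilde R}+\phi^{-1}_*\partial\Pi+E=\phi^*(K_\Pi+\partial\Pi)=0$, where $E$ is supported on the remaining components of $\partial\tilde R$. Since $\partial\tilde R$ is anticanonical and snc with coefficient one, all discrepancies are $\geq -1$, and equal to $-1$ exactly along boundary divisors. Subtracting $\epsilon\phi^*A$ raises each such discrepancy by $\epsilon\,\mathrm{mult}(\phi^*A)$. This is the main obstacle: we must ensure $\phi^*A$ has strictly positive multiplicity along every component of $\partial\tilde R$, including the exceptional ones, and then handle non-snc log resolutions by also allowing further blowups of the snc pair. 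We plan to handle it by choosing $A$ as the divisor of a $T$-weight section vanishing on the whole complement of the open cell. Its pullback then vanishes on the full preimage of $\partial\Pi$, which contains $\partial\tilde R$ because $\phi$ is an isomorphism exactly over $\Pi^\circ$. Since $\tilde R$ with the snc divisor $\partial\tilde R$ is already a log resolution, this suffices.
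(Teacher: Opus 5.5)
You have a genuine gap, and it is in the construction of $A$. For $(\Pi,\partial\Pi-\epsilon A)$ to be klt you need an ample $\Q$-Cartier divisor whose support is \emph{all} of $\partial\Pi$. Any boundary component outside $\mathrm{Supp}(A)$ keeps coefficient $1$ in $\Delta$.

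Your eigensections do not give such an $A$. Let $p_u,p_v$ be the $B^-$- and $B$-eigensections of an ample $\O(\lambda)$ on $G/P$. On $\Pi$, the product $p_up_v$ vanishes exactly along $\Pi\setminus(B^-uP/P\cap BvP/P)$. This set lies in $\partial\Pi$ but can be strictly smaller, because $\Pi^\circ$ is the image of the Richardson cell of $G/B$, not the Richardson cell of $G/P$. Already for $\Pi=G/P=\P^{n-1}$ with $n\ge 3$, $\partial\Pi$ is the union of all $n$ coordinate hyperplanes, as it must be given $K_\Pi+\partial\Pi\sim 0$. But $p_up_v=x_1x_n$ sees only two of them.

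``Adding a combination'' of the missing components is not available in general. Boundary components of projected Richardson varieties need not be $\Q$-Cartier: Schubert varieties are typically not $\Q$-factorial. The $T$-weight section of an ample bundle vanishing exactly on $\partial\Pi$, invoked in your last paragraph, is asserted without construction. Outside special cases such as positroid varieties (Grassmann-necklace products of Pl\"ucker coordinates), you give no reason it exists. Affineness of $\Pi^\circ$ alone does not make $\partial\Pi$ the support of an ample divisor on $\Pi$ itself.

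The rest of your argument is sound. The relation $K_\Pi+\partial\Pi\sim 0$ follows by pushing forward $K_R+\partial R\sim 0$. Log canonicity of $(\Pi,\partial\Pi)$ follows from the snc boundary of the Brion resolution.

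The fix is to run your construction on the Richardson variety $R=X_u^v\subseteq G/B$ instead of on $\Pi$. There $A_R=\mathrm{div}(p_up_v)$ for $\O(\rho)$ is ample and Cartier, with support exactly $\partial R$. Your Brion-resolution computation then shows $(R,\partial R-\epsilon A_R)$ is klt with anti-ample log canonical class. This is essentially the Kumar--Schwede theorem the paper cites.

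The paper then descends log Fano type along $R\to\Pi$ using Fujino--Gongyo. Since $R\to\Pi$ is birational with $\pi_*\O_R=\O_\Pi$, you can also descend by hand:
\begin{itemize}
\item For $A_\Pi$ ample on $\Pi$ and small $\delta$, write $-(K_R+\Delta_R)\sim_\Q \delta\pi^*A_\Pi+H$ with $H$ ample.
\item Add general members $\delta\pi^*A'$ (with $A'\in|kA_\Pi|/k$) and $H'\sim_\Q H$ to $\Delta_R$, and push forward to get $\Gamma$ with $K_\Pi+\Gamma\sim_\Q 0$.
\item The pair $(\Pi,\Gamma)$ is crepant to a klt pair, hence klt: a principal divisor supported on $\pi$-exceptional divisors is zero, since its pushforward is the divisor of a unit on the normal projective $\Pi$.
\item Set $\Delta=\Gamma-\delta A'$. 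It is effective, $(\Pi,\Delta)$ is klt, and $-(K_\Pi+\Delta)\sim_\Q \delta A'$ is ample.
\end{itemize}
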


\begin{proof}
Choose a Richardson variety $R$ that projects birationally onto $\Pi_\C$. Recall that $\Pi_\C$ is projective and normal, and $R \to \Pi_\C$ has connected fibers \cite{kls}. By \cite[Theorem~3.1]{kumar-schwede}, there is an effective $\Q$-divisor $\Delta_R$ on $R$ such that $(R,\Delta_R)$ is klt and $-(K_R+\Delta_R)$ is ample. Applying \cite[Corollary~3.3]{fujino-gongyo} to $f\colon R\to \Pi_\C$ gives an effective $\Q$-divisor $\Delta$ on $\Pi_\C$ such that $(\Pi_\C,\Delta)$ is klt and
$
        -(K_{\Pi_\C}+\Delta)
$ 
is ample, as required.
\end{proof}

In what follows, we only use that $(\Pi_\C, \Delta)$ is klt, and not the ampleness of $-(K_{\Pi_\C} + \Delta)$.

\begin{corollary} \label{cor:simply-connected-fibers}
Let $\Pi_\C$ be a projected Richardson variety over the complex numbers, and let $f \colon \widetilde \Pi \to \Pi_\C$ be a projective resolution of singularities. Then all fibers of $f$ are simply connected.
\end{corollary}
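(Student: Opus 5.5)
The plan is to invoke Takayama's theorem on the simple connectivity of fibers of resolutions of klt singularities, after setting up its hypotheses using Proposition~\ref{prop:logFano}. Takayama proved the following: if $(X,\Delta)$ is a klt pair over $\C$ with $X$ normal, and $f\colon Y \to X$ is a projective resolution, then for every point $x \in X$ there is an analytic neighborhood $U$ of $x$ such that $f^{-1}(U)$ is simply connected. As a consequence the fiber $f^{-1}(x)$ is simply connected. The reason is that $f^{-1}(x)$ is a deformation retract of $f^{-1}(U)$ for $U$ a small contractible neighborhood, using a triangulation of the proper algebraic map $f$. This is in the spirit of the Kollár and Takayama results on $\pi_1$ of klt singularities.

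The steps are as follows.

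1. Apply Proposition~\ref{prop:logFano} to get an effective $\Q$-divisor $\Delta$ with $(\Pi_\C,\Delta)$ klt. Recall from Proposition~\ref{p.kls} that $\Pi_\C$ is normal.

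2. Note that the conclusion does not depend on the choice of resolution for Takayama's statement, since it applies to an arbitrary projective resolution. If one prefers to start from a log resolution of the pair $(\Pi_\C,\Delta)$, one can compare fibers across resolutions. Any two resolutions are dominated by a third. Fibers of a birational morphism between smooth varieties have simply connected fibers by the smooth case of the same theorem, applied with zero boundary, since smooth varieties are klt. Hence $\pi_1$ of fibers is a birational invariant, so it suffices to treat one resolution.

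3. Deduce simple connectivity of each fiber $f^{-1}(x)$ from simple connectivity of $f^{-1}(U)$ using the deformation retraction. Fibers are connected by Zariski's main theorem, since $\Pi_\C$ is normal and $f$ is birational.

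The main obstacle is locating or justifying the precise local statement: Takayama's result is that the local fundamental group of a resolution neighborhood is trivial for klt singularities. The proof uses the vanishing $R^if_*\mathcal{O}=0$ together with a covering argument. A finite étale cover of $f^{-1}(U)$ would give, by Kawamata–Viehweg type vanishing, a contradiction with $\chi(\mathcal{O})$ multiplicativity. Possibly also needed is Kollár's result to rule out infinite quotients.

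If the citation applies only to log resolutions, Step 2 handles the transfer. I would first try citing Takayama directly, and use Step 2 only if the hypotheses are mismatched.
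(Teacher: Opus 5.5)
Your proposal is correct and follows essentially the paper's argument: klt-ness from Proposition~\ref{prop:logFano}, Takayama's local simple connectedness of $f^{-1}(U)$, and a retraction of $f^{-1}(U)$ onto the fiber $E$, which gives $\pi_1(E)\hookrightarrow\pi_1(f^{-1}(U))=1$. Two refinements. First, the paper makes sure Takayama's $U$ actually lies in a retracting region. It takes a neighborhood $N$ of $E$ that retracts onto $E$, chooses $V$ with $f^{-1}(V)\subset N$, and applies Takayama over $V$; note that only a retraction, not a deformation retraction, is needed. Second, your Step~2 is unnecessary, because Takayama's theorem applies to any resolution. As written, its conclusion that $\pi_1$ of fibers is a birational invariant would also need a Vietoris--Smale type theorem.
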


\begin{proof}
All fundamental groups are taken in the analytic topology. By Proposition~\ref{prop:logFano}, there is an effective $\Q$-divisor $\Delta$ such that $(\Pi_\C,\Delta)$ is klt.

Fix $p\in\Pi_\C$, and let $E=f^{-1}(p)$. There is an analytic neighborhood $N$ of $E$ in $\widetilde\Pi$ which retracts onto $E$ \cite[\S2.1]{takayama2}. Since $f$ is proper, there is an analytic neighborhood $V$ of $p$ such that
\[
        f^{-1}(V)\subset N.
\]
Applying Takayama's local simple connectedness theorem \cite{takayama2} to the restricted resolution over $V$, we obtain a contractible neighborhood $U\subset V$ of $p$ such that $f^{-1}(U)$ is simply connected.

The retraction $N \to E$ restricts to a retraction $f^{-1}(U)\longrightarrow E$. It follows that $E$ is connected, and the inclusion $E\hookrightarrow f^{-1}(U)$ induces an injection
\[
        \pi_1(E)\hookrightarrow\pi_1(f^{-1}(U))=1.
\]
Hence $E$ is simply connected, as required.
\end{proof}

\begin{proposition} \label{prop:primitive-H2}
Let $f \colon Y\to X$ be a proper surjective morphism of complex algebraic varieties. Assume that every fiber $Y_x$ is simply connected.  
Then
\[
        f^*\colon H^2(X,\Z)\longrightarrow H^2(Y,\Z)
\]
is injective with torsion-free cokernel. 
\end{proposition}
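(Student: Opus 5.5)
We will use the Leray spectral sequence for $f$ with integer coefficients, working in the analytic topology. Since $f$ is proper, proper base change identifies the stalk $(R^qf_*\Z)_x$ with $H^q(Y_x,\Z)$. The hypothesis then controls the low-degree terms.

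First, we claim $f_*\Z = \Z_X$. Each fiber $Y_x$ is simply connected, and in particular nonempty and connected, so $H^0(Y_x,\Z)=\Z$. The adjunction map $\Z_X\to f_*\Z$ is therefore an isomorphism on all stalks.

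Second, we claim $R^1f_*\Z = 0$. By the universal coefficient theorem, $H^1(Y_x,\Z)=\Hom(H_1(Y_x,\Z),\Z)=\Hom(\pi_1(Y_x)^{ab},\Z)=0$. This uses that proper base change applies to the complex algebraic fiber $Y_x$. It also uses that singular cohomology agrees with sheaf cohomology there, which holds because fibers are locally contractible (triangulable).

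Third, we claim $R^2f_*\Z$ is torsion-free. Here $H^2(Y_x,\Z)\isom \Hom(H_2(Y_x),\Z)\oplus \mathrm{Ext}(H_1(Y_x),\Z)$, and $H_1(Y_x)=0$. So each stalk is a subgroup of a dual group, hence torsion-free. Since $R^2f_*\Z$ is a sheaf whose stalks are all torsion-free, its group of global sections $H^0(X,R^2f_*\Z)$ is torsion-free.

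Now we feed these into the spectral sequence $E_2^{p,q}=H^p(X,R^qf_*\Z)\Rightarrow H^{p+q}(Y,\Z)$. The row $q=1$ vanishes, and $E_2^{p,0}=H^p(X,\Z)$. So the only differential hitting $E^{2,0}$ is $d_2\colon E_2^{0,1}\to E_2^{2,0}$, and it is zero. Hence $E_\infty^{2,0}=E_2^{2,0}=H^2(X,\Z)$. This $E_\infty^{2,0}$ is the image of the edge map, and the edge map is $f^*$. Therefore $f^*$ is injective.

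For the cokernel, consider the filtration $F^2\subset F^1\subset F^0=H^2(Y,\Z)$ on $H^2(Y,\Z)$, with $F^2=\im f^*$. We have $F^1/F^2=E_\infty^{1,1}=0$, so $F^1=F^2$. Also $F^0/F^1=E_\infty^{0,2}\subseteq E_2^{0,2}=H^0(X,R^2f_*\Z)$, which is torsion-free by the third step. Hence $\operatorname{coker} f^* = H^2(Y,\Z)/\im f^*$ embeds in a torsion-free group, and so is itself torsion-free.

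The main obstacle is the sheaf-theoretic bookkeeping. We must justify proper base change in the analytic category for possibly singular, non-compact $X$. We must also check that the stalks are computed by the singular cohomology of the fibers. Both follow from standard facts: $f$ is proper, and complex algebraic varieties are locally compact Hausdorff and admit triangulations.
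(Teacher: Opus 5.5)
Your proposal is correct and follows essentially the same route as the paper: the Leray spectral sequence, with $R^1f_*\Z=0$ and torsion-free stalks of $R^2f_*\Z$ coming from the universal coefficient theorem. The resulting low-degree analysis gives $0\to H^2(X,\Z)\xrightarrow{f^*} H^2(Y,\Z)\to E_\infty^{0,2}\subseteq H^0(X,R^2f_*\Z)$. Your remarks on proper base change and on comparing sheaf and singular cohomology of the fibers only make explicit what the paper uses implicitly.
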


\begin{proof}
Consider the Leray spectral sequence for the constant sheaf $\Z_Y$:
\[
E_2^{p,q}=H^p\bigl(X,R^q f_*\Z_Y\bigr) \Longrightarrow H^{p+q}(Y,\Z).
\]
Since $f$ is proper and surjective with connected fibers, we have $R^0f_*\Z_Y\cong \Z_X$. Also, every fiber is simply connected and hence $H_1(Y_x,\Z)=0$. By the universal coefficient theorem, 
\[
H^1(Y_x,\Z)=0 \quad \mbox{ and } \quad H^2(Y_x,\Z) \cong H_2(Y_x,\Z)^\vee.
\]
In particular, $R^1f_*\Z_Y=0$, and the stalks of $R^2f_*\Z_Y$ are torsion-free.

We now examine total degree $2$. Since $R^1f_*\Z_Y=0$, we have 
$E_2^{p,1}=0$  for all $p$.  There is no non-zero differential entering or leaving from $E_r^{2,0}$ for $r \geq 2$.  Hence
\[
E_\infty^{2,0}=E_2^{2,0}=H^2(X,\Z).
\]
The Leray filtration on $H^2(Y,\Z)$ therefore gives an exact sequence
\[
0\longrightarrow H^2(X,\Z)
\xrightarrow{\,f^*\,}
H^2(Y,\Z)
\longrightarrow E_\infty^{0,2}
\longrightarrow 0.
\]
We claim that $E_\infty^{0,2}$ is torsion-free. To see this, note that there is no incoming differential at $E_r^{0,2}$. The $d_2$-differential leaving $E_2^{0,2}$ has target $E_2^{2,1}=0$, and the only possible later differential is $d_3 \colon E_3^{0,2} \to E_3^{3,0}$. Thus $E_\infty^{0,2}$ is a subgroup of
\[
E_2^{0,2}=H^0(X,R^2f_*\Z_Y).
\]
Since $R^2f_*\Z_Y$ has torsion-free stalks, it follows that $E_\infty^{0,2}$ is torsion-free, as claimed. Hence $$f^* \colon H^2(X,\Z)\to H^2(Y,\Z)$$ is injective and has torsion-free cokernel. 
\end{proof}

\begin{proof}[Proof of Theorem~\ref{thm:isom}]
Choose, as in \cite[\S\S2--3]{kls}, a Richardson variety $R\subset G/B$ whose projection to $\Pi_\C$ is birational, and let $\widetilde R\to R$ be the Brion resolution.  The induced map $f\colon \widetilde R\to \Pi_\C$ is a resolution by a linearly paved variety, so $\Pic(\Pi_\C) \to A_1(\Pi_\C)^\vee$ is injective, by Theorem~\ref{thm:linearly-paved-inj}. 

It remains to prove that $\Pic(\Pi_\C) \to A_1(\Pi_\C)^\vee$ is surjective.  Since $\Pi_\C$ has rational singularities, we have an isomorphism
\[
        \Pic(\Pi_\C)\otimes\Q \longrightarrow A^1(\Pi_\C)\otimes\Q; 
\]
see \cite[Prop.~12.1.4]{kollar-mori} and \cite[p.~23]{totaro}.  Since $\Pi_\C$ is complete and linearly paved, by Proposition~\ref{prop:linearly-paving-projected-Richardson}, the map \eqref{e.chow-dual} identifies $A^1(\Pi_\C)$ with $A_1(\Pi_\C)^\vee$.  Hence $\Pic(\Pi_\C)\otimes\Q \to A_1(\Pi_\C)^\vee\otimes\Q$ is an isomorphism.  It then follows from the injectivity already proved that $\Pic(\Pi_\C)\to A_1(\Pi_\C)^\vee$ has torsion cokernel.

Let $\lambda\in A_1(\Pi)^\vee$.  Choose $m>0$ and $\cL\in\Pic(\Pi)$ whose degree functional is $m\lambda$.  By Lemma~\ref{l.isom}, there is a line bundle $ \tilde{\cL}$ on $\widetilde R$ whose degree functional is $f^*\lambda = \lambda \circ f_*$.  Then $\tilde{\cL}^{\otimes m}$ and
$f^*\cL$ have the same degree on every curve in $\widetilde R$; hence, again by Lemma~\ref{l.isom},
\[
        \tilde{\cL}^{\otimes m} \cong f^*\cL.
\]
Thus $m c_1(\tilde{\cL})=f^*c_1(\cL)$ in $H^2(\widetilde R,\Z)$.

The fibers of $f$ are simply connected by Corollary~\ref{cor:simply-connected-fibers}.  Using Proposition~\ref{prop:primitive-H2}, $f^*$ identifies $H^2(\Pi,\Z)$ with a primitive sublattice of $H^2(\widetilde R,\Z)$. Since $m c_1(\tilde{\cL})$ lies in this sublattice, so does $c_1(\tilde{\cL})$.  Hence $c_1(\tilde{\cL})=f^*\alpha$ for some
$\alpha\in H^2(\Pi,\Z)$.

Using Corollary~\ref{cor:projected-richardson-pic-h2}, choose $\cL'\in \Pic(\Pi)$ with $c_1(\cL')=\alpha$.  Then $\tilde{\cL}$ and
$f^*\cL'$ have the same degree on every curve in $\widetilde R$, so $\tilde{\cL} \cong f^*\cL'$ by Lemma~\ref{l.isom}. We claim that $\lambda$ is the degree functional of $\cL'$.  Let $C\subset \Pi$ be an irreducible curve, and choose an irreducible curve $D \subset \widetilde R$ mapping generically finitely onto $C$, of degree $d$. Then $f_*[D]=d[C]$, and
\[
        d\lambda([C])=(f^*\lambda)([D])=\deg(\tilde{\cL}|_D)
        =\deg(f^*\cL'|_D)=d\deg(\cL'|_C).
\]
Thus $\lambda([C])=\deg(\cL'|_C)$.  Since $A_1(\Pi)$ is generated by curve classes, $\lambda$ lies in the image of $\Pic(\Pi)$, and the theorem follows.
\end{proof}

%%%%%%%%%%%%%%%%%%%%%%%%%%%%%%%%%
\section{Spreading out to positive characteristic} \label{sec:charp}
%%%%%%%%%%%%%%%%%%%%%%%%%%%%%%%%%

We have shown that the degree map $\Pic(\Pi_\kk) \to A_1(\Pi_\kk)^\vee$ is always injective and is an isomorphism when $\tchar(\kk) = 0$.  Here, we consider the conditions under which this inclusion remains an isomorphism over fields of characteristic $p > 0$.

\medskip

We refer to \cite[Exp.~XXII--XXIII]{sga3} or \cite[\S II.1, \S II.13]{jantzen} for details about reductive group schemes over a general base ring.  Fix a root datum, so that the associated split Chevalley group scheme $G_\Z$ is simply connected with maximal torus $T_\Z$; let $B_\Z\subset P_\Z$ be standard Borel and parabolic subgroup schemes.  The associated flag scheme $(G/P)_\Z$ is smooth and projective over $\Z$. The Schubert and opposite Schubert subschemes are the reduced closures of the respective $B_\Z$-orbits and $B^-_\Z$-orbits through the relevant $T_\Z$-fixed points, and the Richardson subschemes $R_\Z$ are their scheme-theoretic intersections. The formation of this intersection commutes with base change.  Since the scheme-theoretic intersection of a Schubert variety with an opposite Schubert variety is reduced, in any characteristic \cite[Lemma~1]{brion-lakshmibai}, the geometric fiber of $R_\Z$ over any algebraically closed field $\kk$ is the Richardson variety $R_\kk$.

Let $\rho$ be the sum of fundamental weights, defining a very ample line bundle on $(G/B)_\Z$, let $\mathcal{L}_\Z$ be its restriction to the Richardson subscheme $R_\Z$, and let $\mathcal{L}_\kk$ be the corresponding line bundle on $R_\kk$ for any field $\kk$.  For any $m\geq0$, the Euler characteristic $\chi(R_{\kk}, \mathcal{L}_{\kk}^{\otimes m})$ is independent of the field $\kk$.  This can be seen via standard monomial theory, or computed directly using localization in equivariant K-theory; see, e.g., \cite{brion-lakshmibai}.  So the Hilbert polynomial of $R_\kk$ with respect to $\mathcal{L}_\kk$ is independent of $\kk$, and it follows that $R_\Z$ is flat over $\Z$.

Now consider the projection $\pi\colon (G/B)_\Z \to (G/P)_\Z$, and let $\Pi_\Z \subseteq (G/P)_\Z$ be the scheme-theoretic image of $R_\Z$.  Given $\Pi$, one can assume $R \to \Pi$ is birational, by choosing a smaller $R$ if necessary.  Each geometric fiber $\Pi_\kk$ is normal and the projection is cohomologically trivial, i.e., $R\pi_*\O_{R_\kk}=\O_{\Pi_\kk}$ \cite[Theorem~4.5, Corollary~4.7]{kls}. In particular $R^{i}(\pi_\kk)_*\O_{R_\kk}=0$ for $i>0$ and every $\kk$. Since $R_\Z$ is flat over $\Z$, the theorem on cohomology and base change implies that the formation of $\O_{\Pi_\Z}=\pi_*\O_{R_\Z}$ commutes with base change. It follows that $\Pi_\Z$ is flat and projective over $\Z$, and its geometric fiber at $\kk$ is the projected Richardson variety $\Pi_\kk$.

The torus-invariant curves in $R_\kk$ are precisely those $T$-invariant curves in $(G/B)_\kk$ joining pairs of $T$-fixed points which lie in $R_\kk$; as such, they are indexed by root-theoretic data that is independent of $\kk$.  The $T$-invariant curves in $\Pi_\kk \subseteq (G/P)_\kk$ are the images of the curves in $R_\kk$, so they are also independent of $\kk$.  Let $C_1,\dots,C_r\subset\Pi_\Z$ be the corresponding curves over $\Z$. Each is isomorphic to $\P^1_\Z$, hence flat over $\Z$ with integral geometric fibers, and for every algebraically closed field $\kk$ the fibers $(C_1)_\kk,\dots,(C_r)_\kk$ are precisely the torus-invariant curves of $\Pi_\kk$. By \cite[Theorem~1]{fmss} their classes $[(C_1)_\kk], \ldots, [(C_r)_\kk]$ generate $A_1(\Pi_\kk)$.

\medskip

For each algebraically closed field $\kk$, consider the surjection 
\[
q_\kk \colon \Z^r \to A_1(\Pi_\kk) \quad \mbox{ given by } \quad e_i \mapsto [(C_i)_\kk].
\]
Set $\Lambda_\kk = \ker q_\kk$, and write $\Lambda_\kk^\perp \subset (\Z^r)^\vee$ for its orthogonal complement. By Theorem~\ref{thm:isom}, the degree map gives an inclusion 
\[
\iota_\kk \colon \Pic(\Pi_\kk) \hookrightarrow \Lambda_\kk^\perp,
\]
 and $\iota_\kk$ is an isomorphism when $\tchar(\kk)=0$.

\begin{lemma}\label{lem:specialization}
For every prime $p$ one has $\Lambda_{\bar\Q}\subseteq\Lambda_{\bar\F_p}$.
\end{lemma}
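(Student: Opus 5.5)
We use specialization of rational equivalence, in the form of Fulton's specialization map for a family over a discrete valuation ring \cite[\S20.3]{it}.

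Let $a=(a_1,\dots,a_r)\in\Lambda_{\bar\Q}$, so $\sum a_i[(C_i)_{\bar\Q}]=0$ in $A_1(\Pi_{\bar\Q})$. The relation is witnessed by finitely many subvarieties and rational functions, so it already holds over some number field $K$, and hence on $\Pi_K=\Pi_\Z\otimes K$. Choose a prime $\mathfrak p$ of $\O_K$ over $p$ and let $R$ be the localization of $\O_K$ at $\mathfrak p$. This is a DVR with fraction field $K$ and residue field a finite field $\F_q$.

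Since $\Pi_\Z$ is flat over $\Z$ (shown above), $\Pi_R:=\Pi_\Z\otimes R$ is flat over $\Spec R$. Fulton's specialization homomorphism
\[
\sigma\colon A_1(\Pi_K)\to A_1(\Pi_{\F_q})
\]
therefore exists. Concretely, for a subvariety $V\subset\Pi_K$ one takes its closure $\bar V\subset\Pi_R$, which is flat over $R$, and intersects it with the special fiber. This map is well defined on rational equivalence classes \cite[Prop.~20.3]{it}.

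Each $C_i$ is isomorphic to $\P^1_\Z$ and is flat over $\Z$. So the closure of $(C_i)_K$ in $\Pi_R$ is $(C_i)_R$, and its special fiber is the integral curve $(C_i)_{\F_q}$ with multiplicity one. This gives $\sigma[(C_i)_K]=[(C_i)_{\F_q}]$. Applying $\sigma$ to the relation yields $\sum a_i[(C_i)_{\F_q}]=0$ in $A_1(\Pi_{\F_q})$.

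Flat pullback along $\Spec\bar\F_p\to\Spec\F_q$ sends $[(C_i)_{\F_q}]$ to $[(C_i)_{\bar\F_p}]$. Hence $a\in\Lambda_{\bar\F_p}$.

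The main point to verify is the multiplicity-one identification $\sigma[(C_i)_K]=[(C_i)_{\F_q}]$. It follows from the fact that $(C_i)_R\cong\P^1_R$ is flat with integral, reduced special fiber. The flatness of $\Pi_\Z$, needed for $\sigma$ to be defined, was established earlier in this section.

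A minor bookkeeping step is the descent of the relation from $\bar\Q$ to a number field $K$. It holds because $A_1(\Pi_{\bar\Q})=\varinjlim A_1(\Pi_K)$ over finite extensions $K$: a rational equivalence involves only finitely many data, all of which are defined over some finite extension.
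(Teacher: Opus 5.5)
Your proof is correct and is essentially the paper's argument: apply Fulton's specialization map and use that each $C_i\cong\P^1_\Z$ is flat with integral fibers, so the class of its generic fiber specializes to the class of its special fiber, whence $q_{\bar\F_p}=\sigma\circ q_{\bar\Q}$. Your descent of the relation to a number field $K$, localization of $\O_K$ at a prime over $p$, and final extension of scalars from $\F_q$ to $\bar\F_p$ spell out the DVR bookkeeping that the paper compresses into a single map $\sigma\colon A_1(\Pi_{\bar\Q})\to A_1(\Pi_{\bar\F_p})$. One small point: flatness of $\Pi_\Z$ is not actually needed for $\sigma$ to exist, since closures of subvarieties of the generic fiber are automatically flat over a DVR.
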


\begin{proof}
Consider the specialization homomorphism on Chow groups, $\sigma\colon A_1(\Pi_{\bar\Q})\to A_1(\Pi_{\bar \F_p})$ \cite[\S20.3]{it}. Each curve $C_i$ is flat over $\Z$ with integral fibers, so $\sigma\bigl([(C_i)_{\bar\Q}]\bigr)=[(C_i)_{\bar \F_p}]$. Hence $q_{\bar\F_p} = \sigma \circ q_{\bar \Q}$, and  the lemma follows.
\end{proof}

\begin{proof}[Proof of Theorem~\ref{t.spreading-out}]
The conclusion is preserved under extension of algebraically closed fields and holds when $\tchar(\kk)=0$, so we may assume $\kk=\bar\F_p$ with $p\notin S$. Choose line bundles $\cL_1,\dots,\cL_m$ on $\Pi$, defined over the ring $\O_{K,S}$ of $S$-integers of a number field $K$, whose restrictions to $\Pi_{\bar\Q}$ generate $\Pic(\Pi_{\bar\Q})$. For each $i$ and $j$, the degree of $\cL_i|_{(C_j)_\kk}$ is independent of the geometric point $\Spec\kk\to\Spec\O_{K,S}$ \cite[\S20.3]{it}. Hence, Lemma~\ref{lem:specialization} gives
\[
\Lambda_{\bar\Q}^\perp\;\subseteq\;\im \iota_{\bar\F_p}\;\subseteq\;\Lambda_{\bar\F_p}^\perp\;\subseteq\;\Lambda_{\bar\Q}^\perp,
\]
so all three inclusions are equalities. It follows that the degree map $$\iota_{\bar \F_p} \colon \Pic(\Pi_{\bar\F_p})\hookrightarrow \Lambda_{\bar\F_p}^\perp = A_1(\Pi_{\bar \F_p})^\vee$$ is surjective, and therefore an isomorphism, as required.
\end{proof}

\begin{remark}
For each prime $p$, Lemma~\ref{lem:specialization} gives $\Lambda_{\bar\F_p}^\perp \subseteq \Lambda_{\bar\Q}^\perp$.  We therefore have inclusions
\[
\Pic(\Pi_{\bar\F_p}) \;\subseteq\; \Lambda_{\bar\F_p}^\perp \;\subseteq\; \Lambda_{\bar\Q}^\perp \;=\; \Pic(\Pi_{\bar\Q})
\]
inside $(\Z^r)^\vee$.  In particular, for each prime $p$, the Picard group of $\Pi_{\bar\F_p}$ is identified with a subgroup of $\Pic(\Pi_{\bar\Q})$.  Conjecture~\ref{conj:integral-generators} predicts that all of these inclusions are isomorphisms.
\end{remark}

{\it Acknowledgements.}  We are grateful to Anders Buch, whose work computing Picard groups and Chow groups of 1-cycles for Richardson varieties in cominuscule $G/P$ inspired this project. We thank him, in particular, for generously sharing his insights and encouraging us to work out the complementary results presented here.  We also thank J\'anos Koll\'ar and Joaqu\'in Moraga for helpful conversations about the topology of klt singularities.

{\it AI disclosure.}  Generative AI was not used by the authors in this work.

\end{document}